\documentclass[12pt,reqno,a4paper]{amsart}

\usepackage{graphicx}
\usepackage{transparent}

\usepackage[latin1]{inputenc}  

\usepackage{mathrsfs}
\usepackage{amssymb,amsmath, amsfonts, amsthm}
\usepackage{latexsym}
\usepackage[dvips]{epsfig}


\oddsidemargin+0.0cm
\evensidemargin-0.5cm

\textheight=608pt  
\linespread{1.0}

\baselineskip=16pt

  \addtolength{\textwidth}{1cm}
  \addtolength{\textheight}{2.2cm}
  \addtolength{\topmargin}{-0.7cm}
  \setlength{\textwidth}{160mm}
 \setlength{\textheight}{220mm}

\newtheorem{theorem}{Theorem}[section]
\newtheorem{proposition}[theorem]{Proposition}
\newtheorem{lemma}[theorem]{Lemma}
\newtheorem{corollary}[theorem]{Corollary}
\newtheorem{remark}[theorem]{Remark}

\makeatletter\makeatother

\usepackage{epsfig,color,xcolor}

\newcommand{\ble}{\begin{lemma}}
\newcommand{\ele}{\end{lemma}}
\newcommand{\be}{\begin{equation*}}
\newcommand{\ee}{\end{equation*}}

\newcommand{\bel}{\begin{equation}}
\newcommand{\eel}{\end{equation}}

\newcommand{\fr}{\frac }

\newcommand{\lap}{\Delta}
\newcommand{\N}{\mathbb{N}}

\newcommand{\R}{\mathbb{R}}

\renewcommand{\to}{\rightarrow}
\newcommand{\To}{\longrightarrow}

\newcommand{\xip}{x_{i,p}}
\newcommand{\xjp}{x_{j,p}}

\newcommand{\mip}{\mu_{i,p}}

\newcommand{\upp}{u_p}

%
%
%
%
%
%
\def\sideremark#1{\ifvmode\leavevmode\fi\vadjust{\vbox to0pt{\vss
 \hbox to 0pt{\hskip\hsize\hskip1em
 \vbox{\hsize2.1cm\tiny\raggedright\pretolerance10000
  \noindent #1\hfill}\hss}\vbox to15pt{\vfil}\vss}}}%

%
%


\begin{document}

\numberwithin{equation}{section}
\parindent=0pt
\hfuzz=2pt
\frenchspacing

\title[]{Blow up of solutions of semilinear heat equations in non radial domains of $\R^2$}

\author[]{Francesca De Marchis, Isabella Ianni}

\address{Francesca De Marchis, Dipartimento di Matematica,  Universit\`a  di Roma {\em Tor Vergata}, Via della Ricerca
Scientifica 1, 00133 Roma, Italy}
\address{Isabella Ianni, Dipartimento di Matematica e Fisica, Seconda Universit\`a di Napoli, V.le Lincoln 5, 81100 Caserta, Italy}

\thanks{2010 \textit{Mathematics Subject classification:} 35K91, 35B35, 35B44, 35J91  }

\thanks{ \textit{Keywords}: semilinear heat equation, finite-time blow-up,  sign-changing stationary solutions, asymptotic behavior.}

\thanks{Research partially supported by FIRB project: {\sl Analysis and Beyond} and PRIN project 2012: 74FYK7-005. }

\begin{abstract} We consider the semilinear heat equation
\begin{equation}\label{problemAbstract}\left\{\begin{array}{ll}v_t-\Delta v= |v|^{p-1}v & \mbox{ in }\Omega\times (0,T)\\
v=0 & \mbox{ on }\partial \Omega\times (0,T)\\
v(0)=v_0 & \mbox{ in }\Omega
\end{array}\right.\tag{$\mathcal P_p$}
\end{equation}
where $p>1$, $\Omega$ is a smooth bounded domain of $\R^2$, $T\in (0,+\infty]$ and $v_0$ belongs to a suitable space.
We give general conditions for a family $u_p$ of sign-changing  stationary solutions of \eqref{problemAbstract}, under which the solution of \eqref{problemAbstract} with initial value $v_0=\lambda u_p$ blows up in finite time if $|\lambda-1|>0$ is sufficiently small and $p$ is sufficiently large. 
Since for $\lambda=1$ the solution is global, this shows that, in general, the set of the initial conditions for which the solution is global is not star-shaped with respect to the origin. In \cite{DicksteinPacellaSciunzi} this phenomenon has been previously observed  in the case when the domain is a ball and the sign changing stationary solution is radially symmetric. Our conditions are more general and we provide examples of stationary solutions $u_p$ which are not radial and exhibit the same behavior.
\end{abstract}

\maketitle

%

\section{Introduction}
We consider the nonlinear heat equation
\begin{equation}\label{problem}\left\{\begin{array}{ll}v_t-\Delta v= |v|^{p-1}v & \mbox{ in }\Omega\times (0,T)\\
v=0 & \mbox{ on }\partial \Omega\times (0,T)\\
v(0)=v_0 & \mbox{ in }\Omega
\end{array}\right.
\end{equation}
where $\Omega\subset \R^N$, $N\in\N$, $N\geq 1$ is a smooth bounded domain, $p>1$, $T\in (0, +\infty]$ and $v_0\in C_0(\Omega)$, where
\[C_0(\Omega)=\{v\in C(\bar\Omega),\ v=0  \mbox{ on } \partial\Omega\}.\]
It is well known that the initial value problem \eqref{problem} is locally well-posed in $C_0(\Omega)$ and admits both nontrivial global solutions and blow-up solutions. Denoting with $T_{v_0}$ the maximal existence time of the solution of \eqref{problem}, we define the set of initial conditions for which the corresponding solution is global, i.e.
\[\mathcal{G}=\{v_0\in C_0(\Omega),\ T_{v_0}=+\infty\}\]
and its complementary set of initial conditions for which the corresponding solution blows-up in finite time:
\[\mathcal{B}=\{v_0\in C_0(\Omega),\ T_{v_0}<+\infty\}.\]

For a fixed  $w\in C_0(\Omega)$, $w\neq 0$ and $v_0=\lambda w$, $\lambda\in\R$, it is known that if $|\lambda|$ is small enough then $v_0\in\mathcal{G}$, while if $|\lambda|$ is sufficiently large then $v_0\in\mathcal{B}$. \\

Moreover, for any $N\geq 1$, considering nonnegative initial data, it can be easily proved that the set $\mathcal G^+=\{v_0\in\mathcal{G},\ v_0\geq 0\}$ is star-shaped with respect to $0$ (indeed it is convex, see \cite{quittnersouplet}). On the contrary when the initial condition changes sign $\mathcal G$ may not be star-shaped.\\

Indeed for $N\geq 3$, in \cite{CazenaveDicksteinWeissler} and \cite{MarinoPacellaSciunzi} it has been shown that there 
exists  $p^*<p_S$, with $p_S=\frac{N+2}{N-2}$, such that $\forall \ p \in \ (p^*,p_S)$
the elliptic problem
\begin{equation}\label{stationaryProblem}
\left\{\begin{array}{ll}-\Delta u_p= |u_p|^{p-1}u_p & \mbox{ in }\Omega\\
u_p=0 & \mbox{ on }\partial \Omega,
\end{array}\right.
\end{equation}
admits a sign a changing solution $u_p$ for which there exists $\epsilon>0$ such that if $0<|1-\lambda|<\epsilon$ then $\lambda u_p\in\mathcal{B}$.
More precisely  this result has been proved first in \cite{CazenaveDicksteinWeissler}  when $\Omega$ is the unit ball and $u_p$ is any sign-changing radial solution of \eqref{stationaryProblem}, and then   
in  \cite{MarinoPacellaSciunzi} for general domains $\Omega$ and sign-changing solutions $u_p$ of \eqref{stationaryProblem} (assuming w.l.g. that $\|u_p^+\|_{\infty}=\|u_p\|_{\infty}$), satisfying the following conditions
\[\tag{a}\int_{\Omega}|\nabla u_p|^2 dx\rightarrow 2S^{\frac{N}{2}}\]
\[\tag{b}\left|\frac{\min u_p}{\max u_p}\right|\rightarrow 0\]
as $p\rightarrow p^*$, where $S$ is the best Sobolev constant for the embedding of $H^1_0(\Omega)$ into $L^{2^*}(\Omega)$.
When $N=1$ such a result does not hold since it is easy to see that for any sign changing solution $u_p$ of \eqref{stationaryProblem}, $\lambda u_p\in \mathcal G$ for $|\lambda|<1$ and $\lambda u_p\in \mathcal B$ for $|\lambda|>1$.

\

The case $N=2$ was left open in the papers \cite{CazenaveDicksteinWeissler} and \cite{MarinoPacellaSciunzi}, mainly because there is not a critical Sobolev exponent in this dimension so that the conditions and results of these papers are meaningless. Recently inspired by \cite{GrossiGrumiauPacella1}, \cite{GrossiGrumiauPacella2}, Dickstein, Pacella and Sciunzi in \cite{DicksteinPacellaSciunzi} succeeded
 to prove a blow up theorem similar to the one in \cite{CazenaveDicksteinWeissler}, considering radial sign changing stationary solutions $u_p$ of \eqref{stationaryProblem} in the unit ball for large exponents $p$. Indeed, the condition $p\rightarrow +\infty$ in dimension $N=2$ can be considered to be the natural extension of the condition $p\rightarrow p_S$ for $N\geq 3$.

In this paper we consider again the case $N=2$ but the bounded domain $\Omega$ is not necessarily a ball. We deal with sign-changing solutions $u_p$ of \eqref{stationaryProblem}  with the following two properties:
\begin{equation} \tag{A}
\exists C>0,  \mbox{ such that }\ \ p\int_{\Omega} |\nabla u_p|^{2}dx \leq C,
\end{equation}

\begin{equation}  \tag{B}
\lim_{R\rightarrow +\infty}\lim_{p\rightarrow +\infty} \mathcal S_{p,R}=0,
\end{equation}
where, for $R>0$, 
\[
\mathcal S_{p,R}:=\sup \left\{ \left|\frac{u_p(y)}{u_p(x_p^+)}\right|^{p-1}\ : \ y\in\Omega, \ |y-x_p^+|> R\mu_p^+\right\},
\]
for $x_p^+$ such that $|u_p(x_p^+)|=\|u_p\|_{\infty}$ and  $\mu_p^+:=\frac{1}{\sqrt{p|u_p(x_p^+)|^{p-1}}}$.

\

Our main result is the following 
\begin{theorem}\label{teorema:principale}
Let $N=2$ and $u_p$ be a family of sign-changing solutions of problem \eqref{stationaryProblem} satisfying $(A)$ and $(B)$. Then, up to a subsequence, there exists $p^*>1$ such that for $p>p^*$ there exists $\epsilon=\epsilon (p) >0$ such that if $0<|1-\lambda|<\epsilon$, then 
\[\lambda u_p\in\mathcal B.\]
\end{theorem}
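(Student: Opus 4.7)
The plan is to combine the classical Payne--Sattinger invariant-set blow-up criterion with a Gazzola--Weth style analysis of orbits starting near an unstable sign-changing equilibrium at energy above the ground-state level, in the spirit of \cite{CazenaveDicksteinWeissler, MarinoPacellaSciunzi, DicksteinPacellaSciunzi}. Along the ray $\lambda\mapsto\lambda u_p$, the Nehari identity $\int_\Omega|\nabla u_p|^2=\int_\Omega|u_p|^{p+1}=:a_p$ yields
\begin{equation*}
E_p(\lambda u_p)=a_p\Bigl(\tfrac{\lambda^2}{2}-\tfrac{|\lambda|^{p+1}}{p+1}\Bigr),\qquad J_p(\lambda u_p)=a_p\bigl(\lambda^2-|\lambda|^{p+1}\bigr),
\end{equation*}
where $J_p(v)=\int_\Omega|\nabla v|^2-\int_\Omega|v|^{p+1}$ is the Nehari functional. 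Thus $\lambda=1$ is a local maximum of $E_p(\lambda u_p)$ with value $E_p(u_p)=\tfrac{p-1}{2(p+1)}a_p$, and $J_p(\lambda u_p)<0$ exactly when $|\lambda|>1$. Denoting by $d_p=\inf_{\mathcal N_p}E_p$ the ground-state Nehari level, the two roots $\alpha_p^-<1<\alpha_p^+$ of $E_p(\alpha_p^\pm u_p)=d_p$ straddling $\lambda=1$ satisfy, by a quadratic expansion,
\begin{equation*}
(\alpha_p^\pm-1)^2 \;\sim\; \frac{2\bigl(E_p(u_p)-d_p\bigr)}{(p-1)\,a_p}.
\end{equation*}

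The asymptotic input from $(A)$ and $(B)$ enters through the following rescaled analysis: $(A)$ ensures $pa_p$ is uniformly bounded, while $(B)$ forces one-point concentration of $|u_p|^{p-1}$ at $x_p^+$, so after extracting a subsequence $p a_p$ converges to a Liouville-type mass. Combined with the standard two-dimensional asymptotics $d_p\sim 4\pi e/p$ for the positive ground state, this yields $E_p(u_p)-d_p=O(1/p)$ together with $(p-1)a_p$ bounded below, whence $(\alpha_p^\pm-1)^2=O(1/p)\to 0$. Consequently, for any fixed small $\epsilon>0$ one can choose $p^*$ so that $\alpha_p^-,\alpha_p^+\in(1-\epsilon,1+\epsilon)$ for every $p>p^*$; Levine's concavity argument then produces $\lambda u_p\in\mathcal B$ for every $\lambda\in(\alpha_p^+,1+\epsilon)$, since there both $E_p(\lambda u_p)<d_p$ and $J_p(\lambda u_p)<0$ hold.

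The principal obstacle is the ``ambiguous'' sub-interval $\alpha_p^-<\lambda<\alpha_p^+$ with $\lambda\neq 1$, where $E_p(\lambda u_p)>d_p$ and the Payne--Sattinger criterion does not apply directly. To treat it I would invoke a Gazzola--Weth / unstable-manifold argument: the linearised operator $-\Delta-p|u_p|^{p-1}$ at $u_p$ has Morse index at least $2$ (since $u_p$ is sign-changing), so the projection of $v(t)-u_p$ onto the unstable eigenspace grows exponentially and the orbit must leave every $H^1_0$-neighbourhood of $u_p$. By the {\L}ojasiewicz--Simon gradient inequality, a bounded orbit would converge to a single equilibrium $v_\infty$ with $E_p(v_\infty)\le E_p(\lambda u_p)<E_p(u_p)$; convergence to $u_p$ is excluded by the exponential departure just described, convergence to $0$ is excluded by an $L^{p+1}$-lower bound derived from the concentration profile of $u_p$, and convergence to the positive or negative ground state is excluded by nodal-domain and concentration considerations consistent with $(B)$. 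The only remaining alternative is finite-time blow-up, which together with the Payne--Sattinger region delivers $\lambda u_p\in\mathcal B$ for all $\lambda$ with $0<|\lambda-1|<\epsilon(p)$.
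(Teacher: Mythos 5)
Your proposal does not follow the paper's route (which reduces everything, via Proposition \ref{proposition:generaleCazenaveDickWeiss}, to verifying the non-degeneracy condition \eqref{condizioneProdottoScalareDiversoDaZero}, i.e.\ $\int_\Omega u_p\varphi_{1,p}\neq 0$, proved in Theorem \ref{condizioneProdottoScalarePositivo} by rescaling around $x_p^+$ and passing to the Liouville limit), and as it stands it has a genuine gap precisely where the theorem is hard. The potential-well/Levine part of your argument only yields blow-up on the region $\{\lambda>\alpha_p^+\}$, where $E_p(\lambda u_p)<d_p$ \emph{and} $J_p(\lambda u_p)<0$; for fixed $p$ this region is bounded away from $\lambda=1$, while the statement requires blow-up for \emph{all} $\lambda\neq 1$ in a punctured neighbourhood of $1$. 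Worse, for $\lambda<1$ one has $J_p(\lambda u_p)>0$, so on that side the potential-well machinery, where it applies at all ($E_p<d_p$), gives \emph{global existence}, not blow-up. Hence the entire content of the theorem --- both sides $0<|1-\lambda|<\epsilon$ --- lives in what you call the ambiguous zone, and there your argument is an assertion rather than a proof: the \L{}ojasiewicz--Simon alternative leaves open convergence to $0$, to the positive/negative ground states, or to \emph{any} other lower-energy equilibrium (of which there may be many sign-changing ones), and your exclusions (``an $L^{p+1}$-lower bound derived from the concentration profile'', ``nodal-domain and concentration considerations consistent with $(B)$'') are not backed by any estimate. There is no invariant quantity along the heat flow preventing an orbit starting at $\lambda u_p$, $\lambda<1$, from decaying to zero; indeed ruling this out is exactly what requires the transversality condition $\int_\Omega u_p\varphi_{1,p}\neq 0$ of Cazenave--Dickstein--Weissler, and establishing that condition under $(A)$ and $(B)$ is the analytic core of the paper (Lemma \ref{lemmaFondamentalePotenziali}, Theorem \ref{theorem:convergenzaAutovalori}, Corollary \ref{Cor:convergenzaAutofunzioni}, Theorem \ref{condizioneProdottoScalarePositivo}). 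Your proposal never performs, or replaces, this step; in particular hypothesis $(B)$, which is crucial in controlling the potential $V_p$ far from the concentration point, is never actually used.

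Two secondary inaccuracies: the instability argument (Morse index $\geq 2$, exponential growth of the unstable projection) shows departure from a neighbourhood of $u_p$ but says nothing about the subsequent fate of the orbit, so it cannot distinguish blow-up from global decay; and the quadratic expansion $(\alpha_p^\pm-1)^2\sim 2\bigl(E_p(u_p)-d_p\bigr)/\bigl((p-1)a_p\bigr)$ is unreliable at the relevant scale $|\lambda-1|\sim p^{-1/2}$, since $\lambda\mapsto|\lambda|^{p+1}$ is not well approximated by its second-order Taylor polynomial there (this matters little, however, because even a corrected version of that computation would only relocate the boundary of the Payne--Sattinger region, not close the main gap). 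To repair the proof you would need to either verify \eqref{condizioneProdottoScalareDiversoDaZero} for large $p$ --- which is the paper's Theorem \ref{condizioneProdottoScalarePositivo}, proved through the convergence of the rescaled eigenpair $(\widetilde\lambda_{1,p},\widetilde\varphi_{1,p})$ to $(\lambda_1^*,\varphi_1^*)$ and the limit $\int_{\R^2}e^U\varphi_1^*>0$ --- or supply a genuinely different dynamical argument that determines the $\omega$-limit of orbits starting on the ray $\lambda u_p$ for $\lambda$ arbitrarily close to $1$ on both sides.
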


\

A few comments on conditions $(A)$ and $(B)$ are needed. The first one is an estimate of the asymptotic behavior, as $p\rightarrow +\infty$, of the energy of the solutions. It is satisfied by different kinds of sign changing solutions (see \cite{DeMarchisIanniPacella1}, \cite{DeMarchisIanniPacella2}, \cite{GrossiGrumiauPacella1}), in particular by the radial ones in the ball (see \cite{DicksteinPacellaSciunzi}, \cite{GrossiGrumiauPacella2}).
The condition $(B)$ is more peculiar
and essentially concerns the asymptotic behavior of $|u_p(y)|^{p-1}$ for points $y$ which are not too close to $x_p^+$; note that $p|u_p(y)|^{p-1}$ can also be divergent since $\liminf_{p\rightarrow +\infty}\|u_p\|_{\infty}\geq 1$.
It is satisfied, in particular, by sign changing radial solutions $u_{p,\mathcal K}$ of \eqref{stationaryProblem} having any fixed number $\mathcal K$ of nodal regions (see Section \ref{sectionCasoRadiale} for details).
But it also holds for a class of sign changing solutions in more general domains as we show in the next theorem.

\

\begin{theorem}\label{teoremaExistenceAB}
Let $\Omega\subset\R^2$ be a 
smooth bounded  domain, containing the origin,  invariant under the action of a cyclic group $G$ of
rotations about the origin
with 
\begin{equation} \label{assumptionGruppoSimmetria}
|G|\geq me
\end{equation}
 ($|G|$ is the order of $G$), for a certain $m>0$.\\
Let $(\upp)$ be a family of sign changing $G$-symmetric   solutions of \eqref{stationaryProblem} 
such that 
\begin{equation} \label{energiaLimitata}
p\int_{\Omega}|\nabla\upp|^2 dx\leq\alpha\,8\pi e,\ \mbox{ for $p$ large}
\end{equation}
and for some  $\alpha<m+1$. Then $u_p$
 satisfies $(A)$ and $(B)$ up to a subsequence.
\end{theorem}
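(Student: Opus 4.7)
Condition (A) is immediate from \eqref{energiaLimitata} with $C := \alpha\cdot 8\pi e$. The substantive part is (B), which I would prove by contradiction. Suppose (B) fails: there exist $\varepsilon>0$, sequences $p_n\to\infty$, $R_n\to\infty$, and points $y_n\in\Omega$ with $|y_n-x_{p_n}^+|>R_n\mu_{p_n}^+$ such that $|u_{p_n}(y_n)/u_{p_n}(x_{p_n}^+)|^{p_n-1}\geq\varepsilon$. In particular $|u_{p_n}(y_n)|\geq\varepsilon^{1/(p_n-1)}|u_{p_n}(x_{p_n}^+)|$, so $y_n$ is a secondary peak of $|u_{p_n}|$ of the same order as the global maximum.

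The first step is the standard blow-up analysis at $x_p^+$. Under hypothesis (A), the asymptotic analysis of planar semilinear problems with polynomial growth (see \cite{DeMarchisIanniPacella1, DeMarchisIanniPacella2}) yields that $\|u_p\|_\infty$ is uniformly bounded, $\|u_p\|_\infty^{p-1}\to e$, and the rescaled function $z_p(y):=(p/u_p(x_p^+))[u_p(x_p^+ + \mu_p^+ y)-u_p(x_p^+)]$ converges in $C^1_{\mathrm{loc}}(\R^2)$ to the Liouville bubble $U(y)=\log(1+|y|^2/8)^{-2}$, for which $\int_{\R^2} e^U\,dy = 8\pi$. Translating back via a Pohozaev-type computation, the neighborhood of $x_p^+$ contributes $8\pi e + o(1)$ to $p\int_\Omega|\nabla u_p|^2\,dx$.

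Next I would apply the same blow-up procedure at $y_n$, at the natural scale $\nu_n := (p_n|u_{p_n}(y_n)|^{p_n-1})^{-1/2}$ (replacing $y_n$ if necessary by a nearby local maximum of $|u_{p_n}|$), obtaining a second independent Liouville bubble carrying another $8\pi e + o(1)$ of rescaled energy; the separation $|y_n-x_{p_n}^+|/\mu_{p_n}^+\to\infty$, together with the fact that $\nu_n/\mu_{p_n}^+$ stays bounded, prevents any overlap of the two bubbles in the limit. Finally, the $G$-invariance of $u_p$ implies that each concentration point generates, under the $G$-action, an orbit consisting either of a single point (if it coincides with the origin, the unique fixed point of $G$) or of exactly $|G|\geq me$ distinct points, every one of which is itself a concentration point carrying its own $8\pi e$ bubble. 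Since at most one of the two orbits can be the trivial one, the total number of distinct concentration points is at least $|G|+1\geq me+1$, whence
\[
p_n\int_\Omega|\nabla u_{p_n}|^2\,dx \;\geq\; (me+1)\cdot 8\pi e + o(1) \;>\; (m+1)\cdot 8\pi e \;>\; \alpha\cdot 8\pi e,
\]
where $e>1$ gives $me+1>m+1$. This contradicts \eqref{energiaLimitata} and establishes (B).

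The main obstacle is making the second blow-up rigorous: I need to verify that the failure of (B) is quantitatively strong enough to produce a genuine, fully formed second Liouville bubble (not a degenerate or partial limit), and that the scale $\nu_n$ is comparable to $\mu_{p_n}^+$ so that divergence $|y_n-x_{p_n}^+|/\mu_{p_n}^+\to\infty$ transfers to divergence at the new scale, cleanly separating the two bubbles. Both points follow from the asymptotics $\|u_p\|_\infty^{p-1}\to e$ and $|u_{p_n}(y_n)/u_{p_n}(x_{p_n}^+)|\to 1$, combined with the profile-convergence results of \cite{DeMarchisIanniPacella1, DeMarchisIanniPacella2}.
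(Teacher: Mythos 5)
Your reduction of (A) to \eqref{energiaLimitata} is fine, and the overall spirit of your argument for (B) (symmetry plus energy quantization should forbid a second concentration point) is the same as the paper's, but the energy bookkeeping that produces your final contradiction is not justified. Under (A) alone, what is known at a concentration point $x_{i,p}$ is only $\liminf_p|u_p(x_{i,p})|\ge 1$; consequently each bubble is guaranteed to contribute only $8\pi$ (not $8\pi e$) to $p\int_\Omega|u_p|^{p+1}=p\int_\Omega|\nabla u_p|^2$. The asymptotics $\|u_p\|_\infty^{p-1}\to e$ that you invoke is not a consequence of (A) established in the cited references, and the paper carefully avoids it. If you replace your unproved $8\pi e$ per bubble by the available $8\pi$, your concluding display becomes $(me+1)\,8\pi$ versus $\alpha\,8\pi e$, which is \emph{not} a contradiction when $\alpha$ is close to $m+1$ (e.g. $m=4$, $\alpha=4.9$ gives $(4e+1)8\pi<4.9e\,8\pi$). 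The paper obtains the missing factor $e$ from two different sources: the hypothesis \eqref{assumptionGruppoSimmetria}, $|G|\ge me$, which yields $|G|\cdot 8\pi$ from the orbit of a single concentration point, all inside \emph{one} nodal region, and the lower bound $p\int_{\mathcal N_p^-}|u_p|^{p+1}\ge 8\pi e+o(1)$ for the \emph{other} nodal region (\cite[Lemma 3.1]{DeMarchisIanniPacella2}); combined with \eqref{energiaLimitata} this forces $|G|\,8\pi\le(\alpha-1)8\pi e<m\,8\pi e$, contradicting $|G|\ge me$.

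Two further gaps. First, your orbit dichotomy (``a concentration point is either the origin or generates $|G|$ separated bubbles'') is false as stated: a concentration point with $0<|x_{i,p}|\le C\mu_{i,p}$ has a nontrivial orbit whose $|G|$ points all collapse into a single bubble at the relevant scale, so no extra energy is produced. The correct dichotomy concerns whether $|x_{i,p}|/\mu_{i,p}$ stays bounded, and proving that it does is precisely the heart of the paper's argument (Lemma \ref{lemma:proposition3.3}), after which $k=1$ follows (Lemma \ref{lemma:k=1Radiale}) and (B) is obtained from the pointwise estimate $(\mathcal P_3^1)$ via Theorem \ref{corollaryCheUso} and Lemma \ref{condizioneNecessariaGeneralissima}. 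Second, upgrading your bad points $y_n$ (coming from the negation of (B)) to a genuine second concentration family, with a full Liouville profile and separation from $x_p^+$ at both scales, is exactly the delicate exhaustion step; you acknowledge it but support it with the unproved asymptotics above. Note also that $y_n$ may lie in $\mathcal N_p^-$ (this is case (B1) of Proposition \ref{proposition:caratterizzazioneB}), which your sketch does not address. The paper's route through the maximal families of Proposition \ref{prop2.2demarchisiannipacella2} treats both cases uniformly and avoids extracting a bubble directly from the failure of (B); as written, your proof does not close without these repairs.
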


\

\begin{remark}The existence of sign-changing solutions of \eqref{stationaryProblem} satisfying the assumption \eqref{energiaLimitata}  of Theorem \ref{teoremaExistenceAB} has been proved  in \cite{DeMarchisIanniPacella1} for  $m\geq 4$ and for large $p$.
\end{remark}

\

Putting together Theorem \ref{teorema:principale} and Theorem \ref{teoremaExistenceAB} one gets the extension of the blow-up result of \cite{DicksteinPacellaSciunzi} to other symmetric domains. Note that this in particular includes the case of sign changing radial solutions in a ball, providing so a different proof of the result of \cite{DicksteinPacellaSciunzi} which strongly relied on the radial symmetry.

\

The proof of Theorem \ref{teorema:principale} follows the  same strategy as the analogous results in \cite{CazenaveDicksteinWeissler, MarinoPacellaSciunzi, DicksteinPacellaSciunzi} being a consequence of the following proposition, which is  a particular case of \cite[Theorem 2.3]{CazenaveDicksteinWeissler}
\begin{proposition} \label{proposition:generaleCazenaveDickWeiss} 
Let $u$ be a sign changing solution of \eqref{stationaryProblem}
and let $\varphi_1$ be a positive eigenvector of the self-adjoint operator $L$ given by $L\varphi=-\Delta \varphi-p|u|^{p-1}\varphi$, for $\varphi\in H^2(\Omega)\cap H^1_0(\Omega)$. Assume that
\begin{equation}\label{condizioneProdottoScalareDiversoDaZero} \int_{\Omega}u\varphi_1\neq 0.
\end{equation}
Then there exists $\epsilon >0$ such that if $0<|1-\lambda|<\epsilon$, then $\lambda u\in\mathcal{B}$.
\end{proposition}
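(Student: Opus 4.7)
The plan is to adapt the instability-driven blow-up mechanism pioneered by Cazenave, Dickstein and Weissler. The sign-changing stationary solution $u$ has Morse index at least two, so the first eigenvalue $\lambda_1$ of the linearized operator $L$ is strictly negative and the associated eigenfunction $\varphi_1$ is positive. This identifies $\varphi_1$ as an exponentially unstable direction of the heat flow at $u$, and the non-orthogonality hypothesis $\int_\Omega u\varphi_1\neq 0$ ensures that the one-parameter curve $\lambda\mapsto\lambda u$ crosses the stable manifold of $u$ transversally at $\lambda=1$. Consequently, for $\lambda$ close to but different from $1$, the solution $v(t)$ of \eqref{problem} with $v_0=\lambda u$ has a nontrivial component in the direction $\varphi_1$ which tends to grow exponentially, pushing $v(t)$ out of any small $L^\infty$-neighborhood of $u$.

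I would formalize this by setting $v(t)=u+w(t)$, so that $w(0)=(\lambda-1)u$ and
\begin{equation*}
w_t+Lw=N(w),\qquad N(w):=|u+w|^{p-1}(u+w)-|u|^{p-1}u-p|u|^{p-1}w,
\end{equation*}
with $N(w)=O(\|w\|_\infty\,|w|)$. Defining the projection $\alpha(t):=\int_\Omega w(t)\varphi_1\,dx$ and integrating by parts with the eigenvalue equation $L\varphi_1=\lambda_1\varphi_1$ gives the scalar ODE
\begin{equation*}
\alpha'(t)=-\lambda_1\alpha(t)+\int_\Omega N(w(t))\varphi_1\,dx,
\end{equation*}
with $\alpha(0)=(\lambda-1)\int_\Omega u\varphi_1\neq 0$ by \eqref{condizioneProdottoScalareDiversoDaZero}. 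Since $-\lambda_1>0$ and the nonlinear term is subordinate to the linear one while $\|w\|_\infty$ remains small, a Gronwall-type comparison forces $|\alpha(t)|\geq |\alpha(0)|\,e^{-\lambda_1 t/2}$ until $\|w(t)\|_\infty$ reaches a fixed threshold.

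In parallel, the parabolic energy $E(v):=\tfrac{1}{2}\int_\Omega|\nabla v|^2-\tfrac{1}{p+1}\int_\Omega|v|^{p+1}$ is a Lyapunov functional, and using the Pohozaev/Nehari identity $\|\nabla u\|_2^2=\|u\|_{p+1}^{p+1}$ for the stationary solution, a Taylor expansion yields
\begin{equation*}
E(\lambda u)=E(u)-\tfrac{(p-1)(\lambda-1)^2}{2}\|\nabla u\|_2^2+o((\lambda-1)^2),
\end{equation*}
so $E(v(t))\leq E(\lambda u)<E(u)$ strictly for $0<|\lambda-1|<\epsilon$. Arguing by contradiction, assume $v$ is global. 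Standard results then imply the orbit is precompact in $C_0(\Omega)$ and its $\omega$-limit set consists of stationary solutions at energy strictly below $E(u)$. The growth of $\alpha$ prevents $v(t)$ from converging back to $u$, while the energy gap combined with the concavity method of Levine applied to a suitable test functional (typically a weighted combination of $\|v\|_2^2$ and the $\varphi_1$-projection) produces a differential inequality yielding finite-time blow-up, contradicting the standing assumption.

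The principal obstacle is Step~4: converting the local exponential instability into a genuine finite-time blow-up requires a careful global argument that works uniformly in the sign of $\lambda-1$. For $\lambda$ slightly greater than $1$ the blow-up is heuristically natural, but for $\lambda<1$ one must rule out convergence to $0$ or to another nearby stationary solution; this is precisely where the transversality condition $\int_\Omega u\varphi_1\neq 0$ is essential, since it forces the drift generated by $\varphi_1$ to be nontrivial on both sides of $\lambda=1$, and where the concavity-method blow-up criterion must be carefully calibrated to the energy level $E(\lambda u)$ rather than to the mountain-pass threshold.
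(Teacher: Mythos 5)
The paper does not actually prove this proposition: it is imported verbatim as a particular case of Theorem~2.3 of \cite{CazenaveDicksteinWeissler}, and the proof there is a genuinely dynamical instability argument, not the energy/concavity scheme you outline. So your sketch must stand on its own, and as written it has two real gaps. First, the Gronwall step is not closed: from $\alpha'(t)=-\lambda_1\alpha(t)+\int_\Omega N(w)\varphi_1$ you cannot conclude $|\alpha(t)|\geq|\alpha(0)|e^{-\lambda_1 t/2}$, because the nonlinear term is only controlled by $\|w(t)\|_\infty\int_\Omega|w(t)|\varphi_1$, i.e.\ by the \emph{full} size of $w$, not by $|\alpha(t)|$. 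To subordinate it to the linear drift you must show that the component of $w$ transverse to $\varphi_1$ stays dominated by the $\varphi_1$-component, which requires a spectral-gap/exponential-dichotomy (or invariant-manifold) argument that is absent; moreover $u$ need not be hyperbolic ($0$ may be an eigenvalue of $L$), so the phrase ``crosses the stable manifold transversally'' is not available as stated, and hypothesis \eqref{condizioneProdottoScalareDiversoDaZero} by itself does not provide it.

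Second, and decisively, the passage from ``the trajectory drifts along $\pm\varphi_1$ and has energy slightly below $E(u)$'' to finite-time blow-up is asserted, not proved, and the tool you invoke would fail: Levine's concavity method requires $E(v_0)<0$, and its potential-well refinement requires $E(v_0)$ below the mountain-pass level $d$ together with sign information on the Nehari functional, whereas here $E(\lambda u)\to E(u)$ with $E(u)>d$ (for sign-changing $u$ it is even above $2d$), and the deficit $E(u)-E(\lambda u)=O((\lambda-1)^2)$ is far too small to reach any such threshold. The exponential growth of $\alpha(t)$ is valid only while $\|w(t)\|_\infty$ stays below a fixed small threshold, so it gives no information after the exit time; nothing in your argument excludes that the solution then remains global and relaxes to $0$ or to another lower-energy equilibrium. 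Ruling out precisely this scenario is the heart of the matter, and it is what the cited proof in \cite{CazenaveDicksteinWeissler} (using uniform bounds for global solutions of the subcritical problem and the dynamics near the unstable direction, cf.\ also \cite{quittnersouplet}) accomplishes; your sketch replaces it with a heuristic. The correct ingredients you do have (negativity of $\lambda_{1,p}$ for any nontrivial stationary solution, the identity $\alpha(0)=(\lambda-1)\int_\Omega u\varphi_1$, the expansion of $E(\lambda u)$) are the easy part of the argument.
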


More precisely we will show that under the assumptions $(A)$ and $(B)$  condition \eqref{condizioneProdottoScalareDiversoDaZero} is satisfied for $p$ large (see Theorem \ref{condizioneProdottoScalarePositivo} in the following). This proof is based on rescaling arguments about the maximum point of $u_p$: using the properties of the solution of  the limit problem, we analyze the asymptotic behavior as $p\rightarrow +\infty$ of the rescaled solutions and of the rescaled first eigenfunction of the linearized operator at $u_p$. In this analysis the assumption $(B)$ is crucial.

\

\

To get Theorem \ref{teoremaExistenceAB} we prove a more general result which shows that  condition $(B)$ holds for  a quite general class of solutions $u_p$ of \eqref{stationaryProblem} (see Lemma \ref{condizioneNecessariaGeneralissima} and Theorem \ref{corollaryCheUso}). 

\

\

The paper is organized as follows.
\\
In Section \ref{sectionPreliminary} we collect some properties satisfied by any  family of solutions $u_p$ under condition $(A)$ and we give a characterization of  condition $(B)$.
\\
In Section \ref{sectionAsymptoticAnaysis} we carry out an asymptotic spectral analysis under assumption $(A)$ and $(B)$, studying the asymptotic behavior of the first eigenvalue and of the first eigenfunction of the linearized operator at $u_p$.
\\  
Section \ref{sectionProofTheorem} is devoted to the proof of Theorem \ref{teorema:principale} via rescaling arguments.
\\
In Section \ref{sectionSufficentConditionToB} we find a  sufficient condition which ensures the validity of  property $(B)$ (see Lemma \ref{condizioneNecessariaGeneralissima}) and we select a general class of solutions to \eqref{stationaryProblem} which satisfies this sufficient condition (see Theorem \ref{corollaryCheUso}).
\\
In Section \ref{sectionCasoGSimmetrico} we prove Theorem \ref{teoremaExistenceAB}.

Finally in Section \ref{sectionCasoRadiale} we show that also the sign-changing radial solutions in the unit ball satisfy the assumptions of Theorem \ref{teoremaExistenceAB}.

\

\

\section{Preliminary results} 
\label{sectionPreliminary}

We fix some notation. For a given family $(u_p)$ of sign-changing stationary solutions of \eqref{problem} we denote by
\begin{itemize}
\item $NL_p$ the nodal line of $\upp$,
\item $x_p^{\pm}$ a maximum/minimum point in $\Omega$ of $u_p$, i.e. $u_p(x_p^{\pm})=\pm\|u_p^{\pm}\|_{\infty}$,
\item $\mathcal{N}_p^{\pm}:=\{x\in\Omega\ :\ u_p^{\pm}(x)\neq 0\}\subset\Omega$ denote the positive/negative domain of $u_p$,
\item $\mu_p^{\pm}:=\frac{1}{\sqrt{p|u_p(x_p^{\pm})|^{p-1}}}$,
\item  $\widetilde{\Omega}_p:=\widetilde{\Omega}_p^{+}=\frac{\Omega-x_p^{+}}{\mu_p^{+}}=\{x\in\mathbb R^2: x_p^{+}+\mu_p^{+}x\in \Omega\}$,
\item  $\widetilde{\mathcal N_p^+}:=\frac{\mathcal N_p^+-x_p^{+}}{\mu_p^{+}}=\{x\in\mathbb R^2: x_p^{+}+\mu_p^{+}x\in \mathcal N_p^+\}$,
\item $d(x,A):=dist (x,A)$, for any $x\in\mathbb R^2, A\subset\mathbb R^2$.
\end{itemize}

We  assume w.l.o.g. that $\|u_p\|_{\infty}=\|u_p^+\|_{\infty}$.

\

In the next two lemmas we collect some useful properties  holding under condition $(A)$.

\

\begin{lemma}\label{primoLemmaCheVieneDaA}Let $(\upp)$ be a family of solutions to \eqref{stationaryProblem} and assume that $(A)$ holds. Then
\[\|u_p\|_{L^{\infty}(\Omega)}\leq C\]
\begin{equation}\label{maxBoundedAwayFromZero}
\liminf_{p\rightarrow +\infty} u_p(x_p^{\pm})\geq 1
\end{equation} 
\begin{equation}\label{muVaAZero}\mu_p^{\pm}\rightarrow 0 \ \mbox{ as }\ p\rightarrow +\infty\end{equation}
\end{lemma}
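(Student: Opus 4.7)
The plan is to derive the three estimates by combining the testing identity $\int_\Omega|\nabla u_p^\pm|^2\,dx = \int_\Omega(u_p^\pm)^{p+1}\,dx$ (obtained by multiplying \eqref{stationaryProblem} by $u_p^\pm$ and integrating by parts) with $(A)$ and Poincar\'e, for the estimates \eqref{maxBoundedAwayFromZero} and \eqref{muVaAZero}, and by a rescaling argument around the extremum points of $u_p$ for the uniform $L^\infty$ bound. Summing the testing identity over signs gives
$$\int_\Omega|u_p|^{p+1}\,dx = \int_\Omega|\nabla u_p|^2\,dx \leq \frac{C}{p}.$$

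I would first derive \eqref{maxBoundedAwayFromZero} and \eqref{muVaAZero} via a short interpolation/Poincar\'e computation. Since $u_p$ changes sign, $u_p^\pm\not\equiv 0$, so
$$\int_\Omega(u_p^\pm)^{p+1}\,dx \leq \|u_p^\pm\|_\infty^{p-1}\!\int_\Omega(u_p^\pm)^2\,dx \leq C_\Omega\,\|u_p^\pm\|_\infty^{p-1}\!\int_\Omega|\nabla u_p^\pm|^2\,dx = C_\Omega\,\|u_p^\pm\|_\infty^{p-1}\!\int_\Omega(u_p^\pm)^{p+1}\,dx,$$
where we used Poincar\'e and the testing identity. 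Dividing by the positive number $\int_\Omega(u_p^\pm)^{p+1}\,dx$ gives $\|u_p^\pm\|_\infty^{p-1}\geq 1/C_\Omega$ uniformly in $p$; taking $(p-1)$-th roots yields \eqref{maxBoundedAwayFromZero}, and then $\mu_p^\pm\leq\sqrt{C_\Omega/p}\to 0$ yields \eqref{muVaAZero}.

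For the uniform $L^\infty$ bound I would argue by contradiction via the Adimurthi--Grossi rescaling. Suppose $\|u_p\|_\infty = u_p(x_p^+)\to +\infty$ along a subsequence, and set $w_p(y):= p\bigl(u_p(x_p^+ + \mu_p^+ y)/u_p(x_p^+) - 1\bigr)$ on $\widetilde{\Omega}_p$. Using $(\mu_p^+)^2 u_p(x_p^+)^{p-1} = 1/p$, a direct computation gives $-\Delta w_p = (1 + w_p/p)^p \leq 1$ with $w_p(0)=0$, $w_p\leq 0$, and by \eqref{muVaAZero} the rescaled domains exhaust $\R^2$. Standard elliptic estimates provide $C^1_{\mathrm{loc}}$-compactness, so along a further subsequence $w_p\to W$ in $C^1_{\mathrm{loc}}(\R^2)$, where $W$ solves $-\Delta W = e^W$ on $\R^2$ with $W(0)=0$, $W\leq 0$; by the Chen--Li classification $\int_{\R^2}e^W\,dx = 8\pi$. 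The change of variables $x = x_p^+ + \mu_p^+ y$ applied in the energy gives, for any fixed $R>0$,
$$p\!\int_\Omega|u_p|^{p+1}\,dx \,\geq\, p(\mu_p^+)^2 u_p(x_p^+)^{p+1}\!\int_{B_R}(1+w_p/p)^{p+1}\,dy \,=\, u_p(x_p^+)^2\!\int_{B_R}(1+w_p/p)^{p+1}\,dy;$$
choosing $R$ so that $\int_{B_R}e^W\,dy$ is close to $8\pi$ and then sending $p\to\infty$, the right-hand side is bounded below by $(8\pi - o(1))\,u_p(x_p^+)^2$ while the left-hand side stays bounded by $C$, which forces $u_p(x_p^+)^2 \leq C/(8\pi - o(1))$ and contradicts $u_p(x_p^+)\to\infty$. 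The symmetric argument around $x_p^-$ handles $\|u_p^-\|_\infty$.

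The main obstacle is precisely the uniform $L^\infty$ bound: it relies on establishing $C^1_{\mathrm{loc}}$-compactness of the rescaled profile $w_p$ (enabled by the uniform pointwise bound $(1+w_p/p)^p\leq 1$ and \eqref{muVaAZero}) and on invoking the Chen--Li classification of entire solutions of the Liouville equation, whereas \eqref{maxBoundedAwayFromZero} and \eqref{muVaAZero} reduce to an essentially algebraic consequence of the testing identity combined with Poincar\'e.
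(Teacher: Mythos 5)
Your Poincar\'e/testing argument for \eqref{maxBoundedAwayFromZero} and \eqref{muVaAZero} is correct and is the standard one: testing the equation with $u_p^\pm$, using $(u_p^\pm)^{p+1}\le\|u_p^\pm\|_\infty^{p-1}(u_p^\pm)^2$ and Poincar\'e in $H^1_0(\Omega)$ gives $\|u_p^\pm\|_\infty^{p-1}\ge \lambda_1(\Omega)$, hence both claims. Note that the paper does not prove this lemma at all but quotes \cite{DeMarchisIanniPacella2} and \cite{GrossiGrumiauPacella1}; your blow-up strategy for the $L^\infty$ bound is indeed the one used in those references.

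However, the $L^\infty$ part as written has a genuine gap: you justify the statement that the rescaled domains $\widetilde{\Omega}_p$ exhaust $\R^2$ by \eqref{muVaAZero}, but $\mu_p^+\to 0$ alone does not give this. Exhaustion is equivalent to $d(x_p^+,\partial\Omega)/\mu_p^+\to+\infty$ (the paper's \eqref{xppiuNonVedeboundary}), and a priori $x_p^+$ could approach $\partial\Omega$ at distance $O(\mu_p^+)$; in that case no fixed ball around the origin is contained in $\widetilde{\Omega}_p$, the interior elliptic estimates and the limit problem on all of $\R^2$ are unavailable, and your contradiction collapses. Excluding this boundary scenario needs a separate argument (for instance rescaling $u_p/u_p(x_p^+)$, which is bounded, satisfies $|\Delta|\le 1/p$ and vanishes on the rescaled boundary, and then using boundary estimates together with a Liouville-type theorem in the half-plane, or quoting the corresponding statement of \cite{DeMarchisIanniPacella2}); it is precisely one of the nontrivial points of the cited proofs, not a consequence of $\mu_p^+\to0$. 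Three smaller remarks: since $u_p$ changes sign, the rescaled equation is $-\Delta w_p=|1+w_p/p|^{p-1}(1+w_p/p)$, so only $|\Delta w_p|\le 1$ holds (which is all you need); the $C^1_{loc}$ compactness is not immediate from $w_p\le 0$ and this bound, because $w_p$ is not a priori locally bounded below (one uses the usual splitting into a Newtonian part plus a harmonic part controlled via Harnack); and the Chen--Li classification requires $\int_{\R^2}e^{W}<\infty$ as a hypothesis, which you should first extract (e.g.\ by Fatou from the same energy computation) --- although for your contradiction the value $8\pi$ is superfluous: any fixed positive lower bound for $\int_{B_1}e^{W}$, which follows from $W(0)=0$ and continuity, already suffices.
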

\begin{proof} It is well known, see for instance \cite[Lemma 2.1]{DeMarchisIanniPacella2} or \cite{GrossiGrumiauPacella1}.
\end{proof}

\begin{lemma} \label{convergenzaVp}
Let $(\upp)$ be a family of solutions to \eqref{stationaryProblem} and assume that $(A)$ holds. Then, up to a subsequence, the rescaled function
\begin{equation}
v_p^+(x):=\fr{p}{\upp(x_p^+)}(\upp(x_p^++\mu_p^+ x)-\upp(x_p^+))
\end{equation}
defined on $\widetilde{\Omega}_{p}$ converges to $U$ in $C^1_{loc}(\mathbb R^2)$, where
\begin{equation}\label{v0}
U(x)=\log\left(\fr1{1+\fr18 |x|^2}\right)^2
\end{equation}
is the solution of the Liouville problem
\begin{equation}\label{LiouvilleProblem}
\left\{
\begin{array}{lr}
-\lap U=e^{U}\ \mbox{ in }\ \R^2\\
U\leq 0, \ U(0)=0\ \mbox{ and }\ \int_{\mathbb{R}^2}e^{U}=8\pi.\end{array}
\right.
\end{equation}
Moreover
\begin{equation}\label{xppiuNonVedeLineaNodale}
\frac{d(x_p^+, NL_p)}{\mu_p^{+}}\rightarrow +\infty\ \mbox{ as }\ p\rightarrow +\infty
\end{equation}
and \begin{equation}\label{xppiuNonVedeboundary}
\frac{d(x_p^+, \partial\Omega)}{\mu_p^{+}}\rightarrow +\infty\ \mbox{ as }\ p\rightarrow +\infty.
\end{equation}
\end{lemma}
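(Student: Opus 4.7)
The plan is to derive the PDE satisfied by the rescaled function $v_p^+$ on the expanding domain $\widetilde\Omega_p$ and pass to the limit. A direct computation exploiting the identity $p(\mu_p^+)^2 u_p(x_p^+)^{p-1}=1$ gives
\[
-\Delta v_p^+(x)=\left|1+\frac{v_p^+(x)}{p}\right|^{p-1}\!\!\left(1+\frac{v_p^+(x)}{p}\right)\qquad\text{in }\widetilde\Omega_p,
\]
with $v_p^+(0)=0$ and $v_p^+\leq 0$ because $x_p^+$ is a global maximum of $u_p$. Since $|u_p|\leq u_p(x_p^+)$ on $\Omega$, the right-hand side is bounded by $1$ in absolute value, and Lemma \ref{primoLemmaCheVieneDaA} provides $\mu_p^+\to 0$ together with $u_p(x_p^+)$ confined to a compact subinterval of $(0,\infty)$.

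First I would establish local uniform bounds on $v_p^+$ on every ball $B_R(0)\subset\widetilde{\mathcal N_p^+}$. Inside the positive region $u_p>0$, the nonlinearity is nonnegative and $v_p^+$ is superharmonic, attaining its maximum value $0$ at $x=0$; combining this sign information with the $L^\infty$ bound on $-\Delta v_p^+$, a Harnack-chain argument (or a Brezis--Merle-type dichotomy adapted to the almost-exponential right-hand side) propagates uniform $L^\infty$ control of $v_p^+$ from the origin outward on any fixed ball. I expect this to be the main obstacle: the naive estimate $|\Delta v_p^+|\leq 1$ alone is not enough to bound $v_p^+$ on a fixed ball in two dimensions, and the sign of the nonlinearity near the concentration point must be used. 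Once the uniform bound is in hand, Calderon--Zygmund and Schauder estimates yield $C^{1,\alpha}_{\mathrm{loc}}$ compactness, so Arzela--Ascoli gives $v_p^+\to U$ in $C^1_{\mathrm{loc}}$ along a subsequence. Since $(1+t/p)|1+t/p|^{p-1}\to e^t$ uniformly on compacta, the limit satisfies $-\Delta U=e^U$, $U(0)=0$, $U\leq 0$. Condition $(A)$ combined with the integration-by-parts identity $\int_{\Omega}|\nabla u_p|^2=\int_{\Omega}|u_p|^{p+1}$, rewritten in the rescaled variable, together with Fatou's lemma, yields $\int_{\R^2}e^U<\infty$; the Chen--Li classification then pins down $U$ as the explicit function in \eqref{v0} (the normalization $\int e^U=8\pi$ being forced by $U(0)=0$ and $U\leq 0$).

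Proving that the above limit is genuinely posed on all of $\R^2$ amounts to establishing \eqref{xppiuNonVedeboundary}. I would argue by contradiction: if $d(x_p^+,\partial\Omega)/\mu_p^+$ remained bounded along a subsequence, then, up to a rotation, $\widetilde\Omega_p$ would converge to a halfspace $H$ and the same compactness analysis would yield a solution of $-\Delta U=e^U$ on $H$ with $U\equiv 0$ on $\partial H$, $U(0)=0$, $U\leq 0$ and $\int_H e^U<\infty$, which is excluded by a standard Pohozaev identity on a halfspace. Finally, \eqref{xppiuNonVedeLineaNodale} follows at once from the $C^1_{\mathrm{loc}}$ convergence already obtained: if $d(x_p^+,NL_p)/\mu_p^+$ were bounded along a subsequence, one could pick $z_p\in NL_p$ with $\zeta_p:=(z_p-x_p^+)/\mu_p^+\to\zeta_\infty\in\R^2$; but $u_p(z_p)=0$ forces $v_p^+(\zeta_p)=-p\to-\infty$, contradicting $v_p^+(\zeta_p)\to U(\zeta_\infty)\in\R$.
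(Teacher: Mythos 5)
Your overall skeleton (rescale, get local uniform bounds, pass to the limit Liouville equation, get finite mass from $(A)$ plus Fatou, classify, then exclude boundary/nodal-line interference by contradiction) is the standard one; note that the paper itself does not reprove this lemma but cites \cite[Proposition 2.2 and Corollary 2.4]{DeMarchisIanniPacella2}, where exactly this kind of analysis is carried out. The equation for $v_p^+$, the bound $|\Delta v_p^+|\leq 1$, the identity $u_p(x_p^++\mu_p^+x)/u_p(x_p^+)=1+v_p^+/p$, the Fatou/mass step and the classification step are all correct. However, as written your argument has a genuine circularity. You establish the $L^\infty$ bounds only on balls $B_R(0)\subset\widetilde{\mathcal N_p^+}$, i.e.\ you already assume that the rescaled positive nodal region contains arbitrarily large balls around the origin --- which is precisely \eqref{xppiuNonVedeLineaNodale} (together with \eqref{xppiuNonVedeboundary}) --- and then at the end you deduce \eqref{xppiuNonVedeLineaNodale} from the $C^1_{loc}(\R^2)$ convergence; in particular the convergence near the limit point $\zeta_\infty$ of the rescaled nodal points was never justified, since a neighborhood of $\zeta_\infty$ is not contained in $\widetilde{\mathcal N_p^+}$. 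The restriction to the nodal region (and the sign of the nonlinearity) is in fact unnecessary: on any ball $B_{2R}(0)\subset\widetilde\Omega_p$ write $v_p^+=w_p+h_p$ with $-\Delta w_p=-\Delta v_p^+$ in $B_{2R}$, $w_p=0$ on $\partial B_{2R}$, so $\|w_p\|_\infty\leq CR^2$; then $h_p$ is harmonic, $CR^2-h_p\geq 0$, and Harnack applied to $CR^2-h_p$ together with $v_p^+\leq 0=v_p^+(0)$ gives the uniform bound on $B_R$. This removes the dependence on \eqref{xppiuNonVedeLineaNodale}, but it still requires large balls inside $\widetilde\Omega_p$, so \eqref{xppiuNonVedeboundary} must be proved \emph{before}, not after, the full convergence: your ordering is backwards.

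The second gap is in your proof of \eqref{xppiuNonVedeboundary} itself. On the portion of $\partial\widetilde\Omega_p$ coming from $\partial\Omega$ one has $v_p^+\equiv -p\to-\infty$, not $0$, so if $d(x_p^+,\partial\Omega)/\mu_p^+$ stayed bounded you could not obtain in the limit a solution of $-\Delta U=e^U$ in a halfspace with $U\equiv 0$ on $\partial H$; uniform estimates for $v_p^+$ up to the rescaled boundary fail for the same reason, and the Pohozaev exclusion you invoke does not apply as stated. A correct and elementary fix is to argue at the level of $\tilde u_p(x):=u_p(x_p^++\mu_p^+x)/u_p(x_p^+)$: it satisfies $|\tilde u_p|\leq 1$, $\tilde u_p(0)=1$, $|\Delta\tilde u_p|\leq 1/p$ and vanishes on the rescaled boundary, so if the rescaled boundary stayed at bounded distance from the origin, boundary elliptic estimates would produce in the limit a bounded harmonic function on a halfplane, vanishing on the boundary and equal to $1$ at an interior point at finite distance from it, which is impossible (odd reflection plus Liouville's theorem). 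With \eqref{xppiuNonVedeboundary} secured first, your Harnack-type bounds, the limit equation, the mass bound and the classification go through on all of $\R^2$, and your final argument for \eqref{xppiuNonVedeLineaNodale} (a rescaled nodal point at bounded distance would force $v_p^+=-p$ at points converging in a region of local uniform convergence) is then correct.
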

\begin{proof}
It is well known, see for instance \cite[Proposition 2.2 \& Corollary 2.4]{DeMarchisIanniPacella2}
\end{proof}

\

\

Observe that, under condition $(A)$, by \eqref{xppiuNonVedeboundary}, for any $R>0$ there exists $p_R>1$ such that the set $\{y\in\Omega, \ |y-x_p^+|> R\mu_p^+ \}\neq \emptyset$ for $p\geq p_R$. As a consequence for any $R>0$ the value  \[
\mathcal S_{p,R}:=\sup \left\{ \left|\frac{u_p(y)}{u_p(x_p^+)}\right|^{p-1}\ : \ y\in\Omega, \ |y-x_p^+|> R\mu_p^+\right\}
\]
in the definition of condition $(B)$ is well-defined for $p\geq p_R$.\\

Next we give a characterization of condition $(B)$:
\begin{proposition}\label{proposition:caratterizzazioneB}
Assume that $u_p$ is a family of sign-changing solutions to \eqref{stationaryProblem} which satisfies condition $(A)$.\\ Then for any $R>0$ there exists $p_R>1$ such that the set $\{y\in\mathcal N_p^+, \ |y-x_p^+|> R\mu_p^+ \}\neq \emptyset$ for $p\geq p_R$ and so
 \begin{equation} \label{definitionMpR}
\mathcal M_{p,R}:=\sup \left\{ \left|\frac{u_p(y)}{u_p(x_p^+)}\right|^{p-1}\ : \ y\in\mathcal N_p^+, \ |y-x_p^+|> R\mu_p^+\right\}
\end{equation} 
is well defined.\\
Moreover condition $(B)$ is equivalent to 
\[
 \left\{
 \begin{array}{lr}
 \displaystyle{ \lim_{p\rightarrow +\infty}}\frac{\|u_p^-\|_{L^{\infty}(\Omega)}^{p-1}}{\|u_p^+\|_{L^{\infty}(\Omega)}^{p-1}}= 0  & \qquad (B1)\\
 \\
 \displaystyle{\lim_{R\rightarrow +\infty}}\lim_{p\rightarrow +\infty}  \mathcal M_{p,R}=0 & \qquad (B2)
 \end{array}
 \right.
 \]
 \end{proposition}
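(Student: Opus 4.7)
The plan has two parts: first, show that $\mathcal M_{p,R}$ is well-defined for $p$ large; second, establish a simple identity linking $\mathcal S_{p,R}$ to $\mathcal M_{p,R}$ and $\|u_p^-\|_\infty^{p-1}/\|u_p^+\|_\infty^{p-1}$, from which the equivalence $(B)\Leftrightarrow (B1)\wedge (B2)$ falls out.

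For the non-emptiness of the set defining $\mathcal M_{p,R}$, I would invoke \eqref{xppiuNonVedeLineaNodale} of Lemma \ref{convergenzaVp}: since $d(x_p^+,NL_p)/\mu_p^+\to+\infty$, for $p\geq p_R$ large enough the ball $B(x_p^+,2R\mu_p^+)$ is entirely contained in the positive nodal domain $\mathcal N_p^+$; in particular points $y$ with $|y-x_p^+|=\tfrac{3R}{2}\mu_p^+$ belong to $\mathcal N_p^+$ and satisfy $|y-x_p^+|>R\mu_p^+$.

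For the equivalence, I would partition the constraint set $\{y\in\Omega:|y-x_p^+|>R\mu_p^+\}$ into its intersections with $\mathcal N_p^+$, $\mathcal N_p^-$ and $NL_p$. On $NL_p$ the function $u_p$ vanishes, so the sup there is $0$. On $\mathcal N_p^+$ the sup is $\mathcal M_{p,R}$ by definition. On $\mathcal N_p^-$ one has $|u_p(y)|\leq \|u_p^-\|_\infty$, hence the sup is at most $(\|u_p^-\|_\infty/\|u_p^+\|_\infty)^{p-1}$; equality is attained at $y=x_p^-$ provided $|x_p^--x_p^+|>R\mu_p^+$. Since $x_p^-\in\mathcal N_p^-$ and $x_p^+\in\mathcal N_p^+$ lie in different nodal domains, $|x_p^--x_p^+|\geq d(x_p^+,NL_p)$, which divided by $\mu_p^+$ diverges by \eqref{xppiuNonVedeLineaNodale}. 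Hence for $p$ sufficiently large (depending on $R$) the constraint is satisfied and
\[
\mathcal S_{p,R}=\max\!\left\{\mathcal M_{p,R},\;\left(\frac{\|u_p^-\|_\infty}{\|u_p^+\|_\infty}\right)^{p-1}\right\}.
\]

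Passing to the iterated limit $\lim_{R\to+\infty}\lim_{p\to+\infty}$, and observing that the second term in the max is independent of $R$, condition $(B)$ is equivalent to both terms in the max going to $0$, i.e.\ to $(B1)$ and $(B2)$. The only delicate point is checking that the sup over $\mathcal N_p^-$ does not retain a spurious $R$-dependence: here the separation estimate \eqref{xppiuNonVedeLineaNodale} is essential, as it guarantees that $x_p^-$ eventually enters the constrained region for any fixed $R$, so that the sup collapses to the $R$-independent quantity $(\|u_p^-\|_\infty/\|u_p^+\|_\infty)^{p-1}$.
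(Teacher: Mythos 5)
Your argument is essentially the paper's: you split $\Omega_{p,R}:=\{y\in\Omega:\ |y-x_p^+|>R\mu_p^+\}$ along $\mathcal N_p^+$, $\mathcal N_p^-$ and $NL_p$, identify $\mathcal S_{p,R}=\max\bigl\{\mathcal M_{p,R},\,(\|u_p^-\|_{\infty}/\|u_p^+\|_{\infty})^{p-1}\bigr\}$ for $p\geq p_R$, and take the iterated limit, noting that the second term carries no $R$-dependence. The one correction needed is that in two places you invoke \eqref{xppiuNonVedeLineaNodale} alone where \eqref{xppiuNonVedeboundary} is also required: (i) to conclude $B_{2R\mu_p^+}(x_p^+)\subset\mathcal N_p^+$ the ball must in particular stay inside $\Omega$, since $\mathcal N_p^+\subset\Omega$, so you need $d(x_p^+,\partial\Omega)/\mu_p^+\to+\infty$ as well; (ii) the inequality $|x_p^--x_p^+|\geq d(x_p^+,NL_p)$ is false for a general non-convex $\Omega$, because the segment from $x_p^+$ to $x_p^-$ may exit $\Omega$ through $\partial\Omega$ before meeting the nodal set (picture a thin horseshoe with the two extrema near the two tips); what the segment argument actually yields is $|x_p^--x_p^+|\geq\min\{d(x_p^+,NL_p),\,d(x_p^+,\partial\Omega)\}$, which still suffices once both \eqref{xppiuNonVedeLineaNodale} and \eqref{xppiuNonVedeboundary} are used. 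Alternatively, and this is what the paper does, once $B_{2R\mu_p^+}(x_p^+)\subset\mathcal N_p^+$ is established you get for free that the whole of $\mathcal N_p^-$ (not merely the point $x_p^-$) is contained in $\Omega_{p,R}$, so the supremum over $\mathcal N_p^-\cap\Omega_{p,R}$ is exactly $(\|u_p^-\|_{\infty}/\|u_p^+\|_{\infty})^{p-1}$ with no further argument. With this repair your proof is complete and coincides with the paper's.
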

\begin{proof}
For $R>0$ and $p>1$ define the set \[{\Omega}_{p,R}:=\left\{y\in\Omega, \ |y-x_p^+|> R\mu_p^+\right\}\ (\subseteq \Omega).\]
In order to prove the equivalence it is enough to show that for any $R>0$ there exists $p_R>1$ such that
\begin{equation}\label{unioneDisgiunta}{\Omega}_{p,R}= \left({\Omega}_{p,R}\cap \mathcal N_p^+\right) \cup  \mathcal N_p^-\cup NL_p
\end{equation}
for $p\geq p_R$ and the union is disjoint.
Indeed \eqref{unioneDisgiunta} implies that 
\[\mathcal S_{p,R}=\max\left\{ \mathcal M_{p,R}, \ \frac{\|u_p^-\|_{L^{\infty}(\Omega)}^{p-1}}{\|u_p^+\|_{L^{\infty}(\Omega)}^{p-1}}\right\}\] and so the conclusion.
By definition $x_p^+\in \mathcal N_p^+$, moreover by \eqref{xppiuNonVedeLineaNodale} and \eqref{xppiuNonVedeboundary}
$\frac{d(x_p^+, NL_p)}{\mu_p^+}\rightarrow +\infty$ and $\frac{d(x_p^+, \partial\Omega)}{\mu_p^+}\rightarrow +\infty$ as $p\rightarrow +\infty$ and so for any $R>0$ there exists $p_R>1$ such that $\frac{d(x_p^+, NL_p)}{\mu_p^+}\geq 2R$ and $\frac{d(x_p^+, \partial\Omega)}{\mu_p^+}\geq 2R$ for $p\geq p_R$,  which implies that   $B_{R\mu_p^+}(x_p^+)\subset \mathcal N_p^+$ for $p\geq p_R$. As a consequence  $\left({\Omega}_{p,R}\cap \mathcal N_p^-\right)=\mathcal N_p^-, \ \left({\Omega}_{p,R}\cap NL_p\right)= NL_p$ 
and  the set $\left({\Omega}_{p,R}\cap \mathcal N_p^+\right)\neq \emptyset$ for $p\geq p_R$. Hence \eqref{unioneDisgiunta} follows from the fact that
\[{\Omega}_{p,R}=
\left({\Omega}_{p,R}\cap \mathcal N_p^+\right)
\cup \left({\Omega}_{p,R}\cap \mathcal N_p^-\right) \cup
\left({\Omega}_{p,R}\cap NL_p\right)
\] and the union is disjoint.
\end{proof}
\begin{remark}
Condition $(B1)$ can be equivalently written as
\[
\lim_{p\rightarrow +\infty}\frac{\mu_p^+}{\mu_p^-}=0.
\]
\end{remark}

\

\

\section{Asymptotic spectral analysis}\label{sectionAsymptoticAnaysis}

\subsection{Linearization of the limit problem}
In \cite{DicksteinPacellaSciunzi} the linearization at $U$ of the Liouville problem \eqref{LiouvilleProblem} has been studied. In the following we recall the main results.\\

For $v\in H^2(\R^2)$  we define the linearized operator by
\[
L^*(v):=-\Delta v -e^{U}v.
\]
We consider the Rayleigh functional $\mathcal R^*: H^1(\R^2)\rightarrow \R$
\[
\mathcal R^*(w):=\int_{\R^2}\left(|\nabla w|^2-e^{U}w^2 \right)\ dx
\]
and define 
\begin{equation}\label{minRayQuo}
\lambda_1^*:=\inf_{ \substack{ w\in H^1(\R^2)\\ \|w\|_{L^2(\R^2)}=1}}\mathcal R^*(w).
\end{equation}
 \begin{proposition} \label{prop:carattProbStar} We have the following
\begin{itemize}
\item[i)] $(-\infty< )\ \lambda_1^*<0$;
\item[ii)] every minimizing sequence of \eqref{minRayQuo} has a subsequence  strongly converging in $L^2(\R^2)$ to a minimizer;
\item[iii)] there exists a unique positive minimizer $\varphi_1^*$ to \eqref{minRayQuo}, which is radial and radially non-increasing;
\item[iv)]  $\lambda_1^*$ is an eigenvalue of $L^*$ and $\varphi_1^*$ is an eigenvector associated to $\lambda_1^*$. Moreover  $\varphi_1^*\in L^{\infty}(\R^2)$.
\end{itemize}
\end{proposition}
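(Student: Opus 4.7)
This is a variational problem for a linear Schr\"odinger-type operator on $\R^2$ with bounded, rapidly decaying potential $e^U(x)=(1+|x|^2/8)^{-2}$; the plan is to follow the now-standard direct-method strategy (essentially that of \cite{DicksteinPacellaSciunzi}), handling in turn existence, symmetry, positivity, uniqueness and regularity of the minimizer.

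For (i), the lower bound $\lambda_1^*>-\infty$ is immediate from $e^U\le 1$: on the constraint $\|w\|_{L^2(\R^2)}=1$ one has $\mathcal{R}^*(w)\ge -\int e^U w^2 \ge -1$. For the strict negativity $\lambda_1^*<0$ I would exhibit a negative-energy competitor; this is just the classical fact that in $\R^2$ any non-trivial nonnegative bounded potential with finite integral produces a negative bound state for $-\Delta - V$ (witnessed by functions of the type $\psi_\beta(x)=\exp\bigl(-(\log(1+|x|))^\beta\bigr)$ with $\beta>1$ close to $1$, whose Dirichlet energy can be made arbitrarily small while $\int V\psi_\beta^2\to \int V>0$), applied here with $V=e^U$.

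For (ii) and (iii), let $w_n$ be minimizing with $\|w_n\|_{L^2}=1$. From $\int e^U w_n^2\le 1$ the sequence is bounded in $H^1(\R^2)$; after passing to a subsequence, $w_n\rightharpoonup w$ in $H^1$ and $w_n\to w$ in $L^2_{\mathrm{loc}}$. The decay $e^U(x)=O(|x|^{-4})$ combined with an $\varepsilon/R$-splitting yields $\int e^U w_n^2\to \int e^U w^2$, and lower semicontinuity of the Dirichlet energy gives $\mathcal{R}^*(w)\le\lambda_1^*$. Setting $t=\|w\|_{L^2}^2\in[0,1]$: if $t=0$ then $\int e^U w_n^2\to 0$, whence $\liminf \mathcal{R}^*(w_n)\ge 0>\lambda_1^*$, a contradiction; if $0<t<1$ then $w/\sqrt{t}$ is admissible with $\mathcal{R}^*(w/\sqrt{t})=\mathcal{R}^*(w)/t\le\lambda_1^*/t<\lambda_1^*$, again a contradiction. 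Hence $t=1$, so $w_n\to w$ strongly in $L^2$ and $w$ is a minimizer. Replacing $w$ by $|w|$ and then by its Schwarz rearrangement does not increase $\mathcal{R}^*$ (Polya--Szeg\H{o} for the gradient and Hardy--Littlewood for $\int e^U w^2$, using that $e^U$ is its own radial decreasing rearrangement since $U$ is), so one produces a radial, radially non-increasing minimizer. Positivity follows from the strong maximum principle applied to the Euler--Lagrange equation $-\Delta\varphi=(e^U+\lambda_1^*)\varphi$, and uniqueness of the positive minimizer comes from the simplicity of the first eigenvalue: any two positive eigenfunctions for $\lambda_1^*$ must be proportional (standard ratio argument).

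For (iv), the Euler--Lagrange equation for the constrained minimization gives directly $L^*\varphi_1^*=\lambda_1^*\varphi_1^*$. For the $L^\infty$-bound one writes $-\Delta\varphi_1^*=f\varphi_1^*$ with $f=e^U+\lambda_1^*\in L^\infty(\R^2)$ and $\varphi_1^*\in H^1\cap L^2$; a Brezis--Kato / Moser bootstrap delivers $\varphi_1^*\in L^q$ for every $q<\infty$, and then $L^p$-elliptic regularity yields $\varphi_1^*\in W^{2,q}_{\mathrm{loc}}\hookrightarrow L^\infty$. The main obstacle I anticipate is the non-compactness of $\R^2$ in the minimization step, i.e.\ ruling out both vanishing and mass escape to infinity; both are excluded precisely by the combination $\lambda_1^*<0$ (vanishing would force $\liminf\mathcal{R}^*(w_n)\ge 0$) and the honest decay of $e^U$ (mass at infinity contributes nothing to the attractive term), and writing this dichotomy out carefully is the real content of the argument.
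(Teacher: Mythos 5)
The paper does not actually prove this proposition; it just cites \cite{DicksteinPacellaSciunzi}, so your write-up is being measured against the standard argument there. Your overall route --- coercivity from $e^U\le 1$, the direct method with an $\varepsilon/R$ splitting exploiting $e^U(x)=O(|x|^{-4})$ to make the potential term weakly continuous, the dichotomy on $t=\|w\|_{L^2}^2\in[0,1]$ killed by $\lambda_1^*<0$, Schwarz symmetrization, the strong maximum principle and simplicity of the first eigenvalue, and the two-dimensional elliptic bootstrap for the $L^\infty$ bound --- is exactly the right (and the standard) one, and parts (ii)--(iv) are correct as written.

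There is, however, a genuine error in your proof of $\lambda_1^*<0$. For $\psi_\beta(x)=\exp\bigl(-(\log(1+|x|))^\beta\bigr)$ one computes, substituting $t=\log(1+r)$,
\begin{equation*}
\int_{\R^2}|\nabla\psi_\beta|^2\,dx=2\pi\beta^2\int_0^\infty t^{2\beta-2}e^{-2t^\beta}\bigl(1-e^{-t}\bigr)\,dt\ \longrightarrow\ 2\pi\Bigl(\tfrac12-\tfrac13\Bigr)=\tfrac{\pi}{3}\quad\text{as }\beta\downarrow 1,
\end{equation*}
not $0$; and since $\psi_\beta\to(1+|x|)^{-1}$ pointwise, $\int e^U\psi_\beta^2\to\int e^U(1+|x|)^{-2}\,dx<\int e^U$, not $\int e^U$. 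So both properties you attribute to this family are false: you have garbled the classical $2$D weak-coupling test functions, which are $e^{-|x|^\beta}$ with $\beta\to0^+$, or the logarithmic cutoff equal to $1$ on $B_R$, to $\log(R^2/|x|)/\log R$ on $B_{R^2}\setminus B_R$ and to $0$ outside, whose Dirichlet energy is $2\pi/\log R\to 0$ while the potential term tends to $\int e^U=8\pi$. For this particular potential your conclusion happens to survive numerically, but the argument as stated does not establish it, and it would fail for a general nonnegative $V$ with $\int V>0$. The cleanest fix here is explicit: take $w=e^{U/2}=(1+|x|^2/8)^{-1}\in H^1(\R^2)$; then $\int_{\R^2}|\nabla w|^2=\frac{2\pi}{3}$ and $\int_{\R^2}e^Uw^2=\frac{8\pi}{3}$, so $\mathcal R^*(w)=-2\pi$ and, since $\|w\|_{L^2}^2=\int_{\R^2}e^U=8\pi$, one gets $\lambda_1^*\le-\frac14<0$. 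With that single replacement your proof is complete.
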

\begin{proof}
See \cite[Proposition 3.1]{DicksteinPacellaSciunzi}.
\end{proof}

\

\subsection{Linearization of the Lane-Emden problem}

In this section we consider the linearization of the Lane-Emden problem and study its connections with the linearization $L^*$ of the Liouville problem.

\

We define the linearized operator at $u_p$ of the Lane-Emden problem in $\Omega$ 
\[L_p(v):=-\Delta v-p |u_p|^{p-1}v,\ \ \
v\in H^2(\Omega)\cap H^1_0(\Omega)
.\]

Let $\lambda_{1,p}$ and $\varphi_{1,p}$ be respectively the first  eigenvalue and the first eigenfunction (normalized in $L^2$) of the operator $L_p$. It is well known that $\lambda_{1,p}< 0,$ $\forall\  p>1$ and that
$\varphi_{1,p}\geq 0$, moreover  $\|\varphi_{1,p}\|_{L^2(\Omega)}=1$.

\

Rescaling
\[\widetilde{\varphi}_{1,p}(x):= \mu_p^+ \ \varphi_{1,p}(x_p^++\mu_p^+ x), \ \ x\in \widetilde{\Omega}_p\]
and setting 
\[
\widetilde{\lambda}_{1,p}:= \left(\mu_p^+\right)^2\, \lambda_{1,p}
\]
then, it is easy to see that
$\widetilde{\varphi}_{1,p}$ and $\widetilde{\lambda}_{1,p}$ are respectively the first eigenfunction and first eigenvalue of the linear  operator $\widetilde{L}_p$ in $\widetilde{\Omega}_p$ with homogeneous Dirichlet boundary conditions, defined as
\[
\widetilde{L}_p v:=-\Delta v- V_p(x)\, v,\ \ \ \ \   
v\in H^2( \widetilde{\Omega}_p )\cap H^1_0( \widetilde{\Omega}_p ),
\]
where
\[
V_p(x):=\left|\frac{u_p(x_p^++\mu_p^+x)}{u_p(x_p^+)} \right|^{p-1}=\left|  1+\frac{v_p^+(x)}{p}  \right|^{p-1}
\ \ \ \ \ \mbox{($v_p^+$ is the function defined in Lemma \ref{convergenzaVp})}.\]
Observe that  $\|\widetilde{\varphi}_{1,p}\|_{L^2(\widetilde{\Omega}_p)}=\|\varphi_{1,p}\|_{L^2(\Omega)}=1$.

%
%
%
%
%
%
%
%

\

\begin{lemma}\label{lemmaFondamentalePotenziali}
Up to a subsequence
\[
\int_{\widetilde{\Omega}_p}\left(e^U- V_p\right)\widetilde{\varphi}_{1,p}^2\rightarrow 0 \ \mbox{ as }\ p\rightarrow +\infty.
\]

\end{lemma}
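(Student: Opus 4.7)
The plan is to split the integration domain $\widetilde{\Omega}_p$ into a ball $B_R$ about the origin and its complement, then use the locally uniform convergence of the rescaled potential to $e^U$ on $B_R$ and control the tail via condition $(B)$ together with the explicit decay of $e^U$. The $L^2$-normalization $\|\widetilde{\varphi}_{1,p}\|_{L^2(\widetilde{\Omega}_p)}=1$ serves as a free integrable weight, so that pointwise smallness of $|e^U-V_p|$ on a region translates directly into smallness of the integral there; no further regularity information on $\widetilde{\varphi}_{1,p}$ is needed.

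Inside the ball, by \eqref{xppiuNonVedeboundary} we have $B_R\subset\widetilde{\Omega}_p$ for $p$ large, and the subsequence extracted in Lemma \ref{convergenzaVp} gives $v_p^+\to U$ uniformly on $B_R$. Writing $V_p=\exp\bigl((p-1)\log(1+v_p^+/p)\bigr)$ and Taylor-expanding the logarithm (justified since $v_p^+$ is bounded on $B_R$ for $p$ large) yields $V_p\to e^U$ uniformly on $B_R$, hence
\[
\int_{B_R}|e^U-V_p|\,\widetilde{\varphi}_{1,p}^2\,dx\le \|e^U-V_p\|_{L^\infty(B_R)}\,\|\widetilde{\varphi}_{1,p}\|_{L^2(\widetilde{\Omega}_p)}^2\longrightarrow 0\ \text{ as }p\to+\infty.
\]

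Outside the ball, I estimate the two potentials separately. For $e^U$, the explicit formula $e^{U(x)}=(1+|x|^2/8)^{-2}$ from \eqref{v0} gives
\[
\int_{\widetilde{\Omega}_p\setminus B_R}e^U\,\widetilde{\varphi}_{1,p}^2\,dx\le \sup_{|x|>R}e^{U(x)}=\bigl(1+R^2/8\bigr)^{-2},
\]
which tends to $0$ as $R\to+\infty$. For $V_p$, each $x\in\widetilde{\Omega}_p\setminus B_R$ corresponds to $y:=x_p^++\mu_p^+x\in\Omega$ with $|y-x_p^+|>R\mu_p^+$, so by the very definition of $\mathcal{S}_{p,R}$,
\[
V_p(x)=\bigl|u_p(y)/u_p(x_p^+)\bigr|^{p-1}\le \mathcal{S}_{p,R},
\]
and therefore $\int_{\widetilde{\Omega}_p\setminus B_R}V_p\,\widetilde{\varphi}_{1,p}^2\,dx\le \mathcal{S}_{p,R}$, which is directly controlled by condition $(B)$.

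To conclude, given $\varepsilon>0$ I will first fix $R$ large enough that $(1+R^2/8)^{-2}<\varepsilon/3$ and, by $(B)$, $\limsup_{p\to+\infty}\mathcal{S}_{p,R}<\varepsilon/3$; then, for this $R$ frozen, I will choose $p$ large enough that both the ball contribution is smaller than $\varepsilon/3$ and $\mathcal{S}_{p,R}<\varepsilon/3$. The main subtlety I expect is that the two limits in condition $(B)$ are iterated and not joint, which dictates exactly this order of choice of $R$ first, then $p$; once this is respected, the estimate is routine, and condition $(A)$ enters only indirectly through Lemma \ref{convergenzaVp} (to provide the subsequence along which $v_p^+\to U$).
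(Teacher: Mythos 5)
Your proposal is correct and follows essentially the same route as the paper's proof: the same splitting into $\{|x|\le R\}$ and $\{|x|>R\}$, the same use of the $C^1_{loc}$ convergence of $v_p^+$ to $U$ for the inner part, the explicit bound $\sup_{|x|>R}e^{U(x)}$ for the $e^U$ tail, and the identification $\sup_{\widetilde{\Omega}_p\setminus B_R}V_p=\mathcal S_{p,R}$ controlled by $(B)$ for the $V_p$ tail. Your remark on the order of the iterated limits ($R$ first, then $p$) is exactly the right reading of condition $(B)$.
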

\begin{proof} For $R>0$ we divide the integral in the following way
\[
\int_{\widetilde{\Omega}_p}\left(e^U- V_p\right)\widetilde{\varphi}_{1,p}^2= 
\underbrace{
\int_{\widetilde{\Omega}_p\cap\{|x|\leq R\}}\left(e^U- V_p\right)\widetilde{\varphi}_{1,p}^2
}_{A_{p,R}}
 +
\underbrace{
\int_{\widetilde{\Omega}_p\cap\{|x|> R\}}\left(e^U- V_p\right)\widetilde{\varphi}_{1,p}^2
}_{B_{p,R}}.
\]
Now
\begin{equation}\label{step1A}
A_{p,R}=\int_{\widetilde{\Omega}_p\cap\{|x|\leq R\}}\left(e^U- V_p\right)\widetilde{\varphi}_{1,p}^2\rightarrow 0\ \ \mbox{ as }\ p\rightarrow +\infty, \ \mbox{ for all }\ R>0,
\end{equation}
since, up to a subsequence, $v_p^+\rightarrow U$ in $C^1_{loc}(\R^2)$ as $p\rightarrow +\infty$ (see Lemma \ref{convergenzaVp})
and so
$V_p\rightarrow e^U$ uniformly in $B_R(0)$, up to a subsequence.\\
On the other hand
\begin{eqnarray} 
B_{p,R} &=& \int_{\widetilde{\Omega}_p\cap\{|x|> R\}}\left(e^U- V_p\right)\widetilde{\varphi}_{1,p}^2 
=\underbrace{
 \int_{\widetilde{\Omega}_p\cap\{|x|> R\}}e^U \widetilde{\varphi}_{1,p}^2}_{C_{p,R}}-
\underbrace{
 \int_{\widetilde{\Omega}_p\cap\{|x|> R\}}V_p\widetilde{\varphi}_{1,p}^2}_{D_{p,R}}.
\end{eqnarray}
Following \cite{DicksteinPacellaSciunzi} we estimate
\[
(0\leq)\ \ C_{p,R}= \int_{\widetilde{\Omega}_p\cap\{|x|> R\}}e^U \widetilde{\varphi}_{1,p}^2\leq  \sup_{|x|>R}e^{U(x)} \int_{\widetilde{\Omega}_p}\widetilde{\varphi}_{1,p}^2=\sup_{|x|>R}e^{U(x)}=\sup_{|x|>R}\left( \frac{1}{1+\frac{|x|^2}{8}}\right)^2\leq \frac{64}{R^2}.
\]
Last we estimate
\begin{eqnarray*}
(0\leq)\ \ D_{p,R}=  \int_{\widetilde{\Omega}_p\cap\{|x|> R\}}V_p\widetilde{\varphi}_{1,p}^2
&\leq& \sup_{\widetilde{\Omega}_p\cap\{|x|> R\}} V_p(x)\ \int_{\widetilde{\Omega}_p}  \widetilde{\varphi}_{1,p}^2\\
&=&\sup_{\widetilde{\Omega}_p\cap\{|x|> R\}} V_p(x)\\
&=& \sup_{\widetilde{\Omega}_p\cap\{|x|> R\}}\left| \frac{u_p(x_p^+ +\mu_p^+ x)}{u_p(x_p^+)}\right|^{p-1}\\
&=&  \sup_{\Omega\cap\{|y-x_p^+|> R\mu_p^+\}}\left| \frac{u_p(y)}{u_p(x_p^+)}\right|^{p-1}\\
&=& \mathcal S_{p,R},
\end{eqnarray*}
so assumption $(B)$ implies that \[\lim_{R\rightarrow +\infty}\lim_{p\rightarrow +\infty} D_{p,R}=0,\]
and this concludes the proof.
\end{proof}

\

\begin{theorem}\label{theorem:convergenzaAutovalori}
We have, up to a subsequence, that
\begin{equation}
\widetilde{\lambda}_{1,p}\rightarrow \lambda_1^* \ \ \mbox{ as }\ p\rightarrow +\infty.
\end{equation}
\end{theorem}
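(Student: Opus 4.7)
I would prove the convergence by matching a $\limsup$ and a $\liminf$ bound on $\widetilde{\lambda}_{1,p}$, using $\varphi_1^*$ as a trial function for the upper bound and the rescaled eigenfunctions $\widetilde{\varphi}_{1,p}$ as a minimizing sequence in the limit for the lower bound.

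\emph{Upper bound.} First I would show $\limsup_{p\to+\infty}\widetilde{\lambda}_{1,p}\leq\lambda_1^*$ by testing the Rayleigh quotient of $\widetilde{L}_p$ against a cutoff of the radial minimizer $\varphi_1^*$ from Proposition \ref{prop:carattProbStar}. Precisely, I would fix $R>0$ and a smooth $\eta_R$ with $\eta_R\equiv 1$ on $B_R$ and supp$(\eta_R)\subset B_{2R}$; by \eqref{xppiuNonVedeboundary} one has $B_{2R}\subset\widetilde{\Omega}_p$ for $p$ large, so $\eta_R\varphi_1^*\in H^1_0(\widetilde{\Omega}_p)$. Since $v_p^+\to U$ in $C^1_{loc}(\R^2)$ (Lemma \ref{convergenzaVp}), $V_p\to e^U$ uniformly on $B_{2R}$, hence the Rayleigh quotient of $\eta_R\varphi_1^*$ with respect to $\widetilde{L}_p$ converges as $p\to+\infty$ to that of $\eta_R\varphi_1^*$ with respect to $L^*$. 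Letting then $R\to+\infty$, the latter tends to $\mathcal R^*(\varphi_1^*)/\|\varphi_1^*\|^2_{L^2}=\lambda_1^*$ because $\varphi_1^*$ realizes \eqref{minRayQuo}.

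\emph{$H^1$-bound and weak limit.} Writing the eigenvalue identity
\[
\widetilde{\lambda}_{1,p}=\int_{\widetilde\Omega_p}|\nabla\widetilde\varphi_{1,p}|^2-\int_{\widetilde\Omega_p}V_p\,\widetilde\varphi_{1,p}^2,
\]
Lemma \ref{lemmaFondamentalePotenziali} together with $\|e^U\|_{L^\infty}\leq 1$ and $\|\widetilde\varphi_{1,p}\|_{L^2}=1$ gives $\int V_p\,\widetilde\varphi_{1,p}^2\leq 1+o(1)$, and combined with the upper bound this forces $\int|\nabla\widetilde\varphi_{1,p}|^2$ to be uniformly bounded. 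Extending $\widetilde\varphi_{1,p}$ by zero on $\R^2\setminus\widetilde\Omega_p$, up to a subsequence $\widetilde\varphi_{1,p}\rightharpoonup\varphi$ weakly in $H^1(\R^2)$, strongly in $L^2_{loc}(\R^2)$, and a.e., with $\varphi\geq 0$ and $\|\varphi\|_{L^2(\R^2)}^2\leq 1$.

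\emph{Lower bound and conclusion.} Using the fast decay $e^{U(x)}\leq 64/|x|^4$, the uniform tail estimate $\int_{|x|>R}e^U\widetilde\varphi_{1,p}^2\leq 64/R^4$ and strong $L^2_{loc}$ convergence give $\int e^U\widetilde\varphi_{1,p}^2\to\int e^U\varphi^2$; Lemma \ref{lemmaFondamentalePotenziali} then upgrades this to $\int V_p\widetilde\varphi_{1,p}^2\to\int e^U\varphi^2$. Weak lower semicontinuity of the Dirichlet integral gives
\[
\liminf_{p\to+\infty}\widetilde{\lambda}_{1,p}\geq \int_{\R^2}|\nabla\varphi|^2-\int_{\R^2}e^U\varphi^2=\mathcal R^*(\varphi).
\]
Since the upper bound yields $\liminf\widetilde{\lambda}_{1,p}\leq\lambda_1^*<0$, necessarily $\varphi\not\equiv 0$. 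Applying the Rayleigh characterization \eqref{minRayQuo} to $\varphi/\|\varphi\|_{L^2}$ and using $\lambda_1^*<0$ together with $\|\varphi\|_{L^2}^2\leq 1$ gives $\mathcal R^*(\varphi)\geq\lambda_1^*\|\varphi\|_{L^2}^2\geq \lambda_1^*$. Combining both bounds yields $\widetilde{\lambda}_{1,p}\to\lambda_1^*$.

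\emph{Main obstacle.} The delicate point is ruling out that mass of $\widetilde\varphi_{1,p}$ escapes to infinity in the non-compact domain $\widetilde\Omega_p$: without controlling the potential term $\int V_p\widetilde\varphi_{1,p}^2$ uniformly in $p$, the weak limit could be trivial or the Rayleigh quotient in the limit could be strictly larger than $\lambda_1^*$. Both the fast decay of $e^U$ and Lemma \ref{lemmaFondamentalePotenziali} (which is precisely where hypothesis $(B)$ enters, to kill the contribution of $V_p$ outside large balls) are essential to bypass this difficulty.
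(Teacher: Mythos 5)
Your proof is correct. The upper bound $\limsup_{p}\widetilde{\lambda}_{1,p}\leq\lambda_1^*$ coincides with the paper's Step~2: both test the variational characterization of $\widetilde{\lambda}_{1,p}$ with a cut-off of $\varphi_1^*$ and use the locally uniform convergence $V_p\to e^U$. For the lower bound you take a genuinely different route. The paper's Step~1 is a two-line argument: since $\widetilde{\varphi}_{1,p}$, extended by zero, is an admissible $L^2$-normalized competitor in \eqref{minRayQuo}, one has directly
\[
\lambda_1^*\;\leq\;\mathcal R^*(\widetilde{\varphi}_{1,p})\;=\;\widetilde{\lambda}_{1,p}-\int_{\widetilde{\Omega}_p}\bigl(e^U-V_p\bigr)\widetilde{\varphi}_{1,p}^2,
\]
and Lemma \ref{lemmaFondamentalePotenziali} kills the error term; no weak limits are needed. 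You instead derive a uniform $H^1$ bound, extract a weak limit $\varphi$, pass to the limit in the potential term via the decay of $e^U$ combined with Lemma \ref{lemmaFondamentalePotenziali}, invoke weak lower semicontinuity of the Dirichlet integral, rule out $\varphi\equiv 0$ using $\lambda_1^*<0$, and conclude via $\mathcal R^*(\varphi)\geq\lambda_1^*\|\varphi\|_{L^2}^2\geq\lambda_1^*$ (correctly exploiting the sign of $\lambda_1^*$ and $\|\varphi\|_{L^2}\leq 1$). All these steps are sound, but the route is considerably longer and essentially re-proves the concentration-compactness for minimizing sequences already packaged in Proposition \ref{prop:carattProbStar}\,(ii). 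What your version buys is that it delivers, almost for free, $\|\varphi\|_{L^2}=1$, hence $\varphi=\varphi_1^*$ and the strong $L^2$ convergence of $\widetilde{\varphi}_{1,p}$, i.e.\ Corollary \ref{Cor:convergenzaAutofunzioni}, which the paper obtains separately by noting that $\widetilde{\varphi}_{1,p}$ is a minimizing sequence for \eqref{minRayQuo}.
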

\begin{proof} We divide the proof into two steps.\\
{\sl {\bf Step 1.} We show that, up to a subsequence, for $\epsilon >0$ 
\begin{equation}
\lambda_1^*\leq \widetilde{\lambda}_{1,p} + \epsilon \ \ \mbox{ for $p$ sufficiently large}.
\end{equation}
}
\begin{eqnarray}
\lambda_1^* &\leq& \int_{\R^2}\left( |\nabla \widetilde{\varphi}_{1,p}|^2-e^{U}\ \widetilde{\varphi}_{1,p}^2\right)\nonumber\\
& =& \int_{\widetilde{\Omega}_p} V_p\widetilde{\varphi}_{1,p}^2 + \widetilde{\lambda}_{1,p}\int_{\widetilde{\Omega}_p}\widetilde{\varphi}_{1,p}^2-\int_{\widetilde{\Omega}_p}e^{U}\ \widetilde{\varphi}_{1,p}^2\nonumber\\
& =& \widetilde{\lambda}_{1,p}- \int_{\widetilde{\Omega}_p}\left(e^U- V_p\right)\widetilde{\varphi}_{1,p}^2 
\end{eqnarray}
and so the conclusion follows by Lemma \ref{lemmaFondamentalePotenziali}.
\

\

{\sl {\bf Step 2.} We show that, up to a subsequence, for $\epsilon >0$ 
\begin{equation}
\widetilde{\lambda}_{1,p}\leq \lambda_1^* + \epsilon \ \ \mbox{ for $p$ sufficiently large}.
\end{equation}
}

The proof is similar to the one in \cite{DicksteinPacellaSciunzi}, we repeat it for completeness. For $R>0$, let us consider a cut-off regular function $\psi_R(x)=\psi_R(r)$ such that 
\[
\left\{
\begin{array}{lr}
0\leq\psi_R\leq1\\
\psi_R=1 \ \mbox{ for }\ r\leq R\\
\psi_R=0\ \mbox{ for }\ r\geq 2 R\\
|\nabla \psi_R|\leq 2/R
\end{array}
\right.
\]
and let us  set
\[ w_R:=\frac{\psi_R\varphi_1^*}{\|\psi_R\varphi_1^*\|_{L^2(\R^2)}}.\]
Hence, from the variational characterization of $\widetilde{\lambda}_{1,p}$ we deduce that
\begin{eqnarray}\label{pa}
\widetilde{\lambda}_{1,p} &\leq& \int_{\R^2}\left(|\nabla w_R|^2- V_p(x)w_R^2\right)dx\nonumber\\
&=& \int_{\R^2}\left(|\nabla w_R|^2- e^{U(x)}w_R^2\right)dx+ \int_{\R^2}\left(e^{U(x)}- V_p(x)\right)w_R^2dx.
\end{eqnarray}
Since $w_R\rightarrow \varphi_1^*$ in $H^1(\R^2)$ as $R\rightarrow +\infty$, it is easy to see that
given $\epsilon>0$ we can fix $R>0$ such that 
\begin{equation}\label{pe}
\int_{\R^2}\left(|\nabla w_R|^2- e^{U(x)}w_R^2\right)dx\leq \lambda_1^*+\frac{\epsilon}{2}.
\end{equation}
For such a fixed value of $R$ we can argue similarly as in the proof of \eqref{step1A} in Lemma \ref{lemmaFondamentalePotenziali} to obtain that, up to a subsequence in $p$,
\begin{equation}\label{pi}
\int_{\R^2}\left(e^{U(x)}- V_p(x)\right)w_R^2dx\leq \frac{\epsilon}{2}
\end{equation}
for $p$ large enough. Hence the proof of Step 2 follows from \eqref{pa}, \eqref{pe} and \eqref{pi}.
\end{proof}

\

\

\

\begin{corollary}\label{Cor:convergenzaAutofunzioni} Up to a subsequence
\[\widetilde{\varphi}_{1,p}\rightarrow \varphi_1^*\ \mbox{ in }\ L^2(\mathbb R^2)\ \mbox{ as }\ p\rightarrow +\infty.\]
\end{corollary}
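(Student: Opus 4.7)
The plan is to show that, after extending $\widetilde{\varphi}_{1,p}$ by zero outside $\widetilde{\Omega}_p$, the sequence is a minimizing sequence for $\lambda_1^*$ in \eqref{minRayQuo}, and then invoke the compactness of minimizing sequences stated in Proposition \ref{prop:carattProbStar}(ii).

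First I would test the eigenvalue equation $-\Delta \widetilde{\varphi}_{1,p}-V_p\widetilde{\varphi}_{1,p}=\widetilde{\lambda}_{1,p}\widetilde{\varphi}_{1,p}$ against $\widetilde{\varphi}_{1,p}$ itself, getting
\[
\int_{\R^2}|\nabla \widetilde{\varphi}_{1,p}|^2 = \widetilde{\lambda}_{1,p}+\int_{\widetilde{\Omega}_p}V_p\widetilde{\varphi}_{1,p}^2,
\]
and then use Lemma \ref{lemmaFondamentalePotenziali} together with the trivial bound $\int e^U\widetilde{\varphi}_{1,p}^2\le \|e^U\|_\infty=1$ to write $\int V_p\widetilde{\varphi}_{1,p}^2 = \int e^U\widetilde{\varphi}_{1,p}^2+o(1)\le 1+o(1)$. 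Combined with Theorem \ref{theorem:convergenzaAutovalori} this yields a uniform $H^1(\R^2)$-bound for $\widetilde{\varphi}_{1,p}$.

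Next I would compute
\[
\mathcal R^*(\widetilde{\varphi}_{1,p}) \;=\; \int_{\R^2}|\nabla \widetilde{\varphi}_{1,p}|^2-\int_{\R^2}e^U\widetilde{\varphi}_{1,p}^2 \;=\; \widetilde{\lambda}_{1,p}-\int_{\widetilde{\Omega}_p}\bigl(e^U-V_p\bigr)\widetilde{\varphi}_{1,p}^2,
\]
and apply Theorem \ref{theorem:convergenzaAutovalori} and Lemma \ref{lemmaFondamentalePotenziali} to conclude $\mathcal R^*(\widetilde{\varphi}_{1,p})\to \lambda_1^*$. Since moreover $\|\widetilde{\varphi}_{1,p}\|_{L^2(\R^2)}=1$, the sequence is admissible and minimizing for \eqref{minRayQuo}.

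Finally, Proposition \ref{prop:carattProbStar}(ii) provides a subsequence converging strongly in $L^2(\R^2)$ to some minimizer $\varphi$, with $\|\varphi\|_{L^2(\R^2)}=1$. The key concluding observation is that $\widetilde{\varphi}_{1,p}\ge 0$ (as a first Dirichlet eigenfunction on $\widetilde{\Omega}_p$), hence the pointwise a.e.\ limit $\varphi$ is nonnegative and nontrivial; by the uniqueness statement in Proposition \ref{prop:carattProbStar}(iii), $\varphi=\varphi_1^*$. The mildest obstacle is making sure the zero extension is legitimate and the $H^1(\R^2)$-setting for $\mathcal R^*$ applies, which is fine thanks to the Dirichlet boundary condition and \eqref{xppiuNonVedeboundary}, which ensures $\widetilde{\Omega}_p\to\R^2$.
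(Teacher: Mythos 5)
Your proposal is correct and follows essentially the same route as the paper: you show $\mathcal R^*(\widetilde{\varphi}_{1,p})\to\lambda_1^*$ by writing $\lambda_1^*-\mathcal R^*(\widetilde{\varphi}_{1,p})=(\lambda_1^*-\widetilde{\lambda}_{1,p})+\int_{\widetilde{\Omega}_p}(e^U-V_p)\widetilde{\varphi}_{1,p}^2$ and invoking Theorem \ref{theorem:convergenzaAutovalori} and Lemma \ref{lemmaFondamentalePotenziali}, then conclude via Proposition \ref{prop:carattProbStar} ii) and iii). The extra remarks on the $H^1$ bound, the zero extension, and the sign of $\widetilde{\varphi}_{1,p}$ (to identify the limit with the unique positive minimizer) are sound details that the paper leaves implicit.
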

\begin{proof}
By Lemma \ref{lemmaFondamentalePotenziali} and  Theorem \ref{theorem:convergenzaAutovalori} we have that, passing to a subsequence 
\[\lambda_{1}^*-\mathcal R^*(\widetilde{\varphi}_{1,p})= (\lambda_{1}^*-\widetilde{\lambda}_{1,p})+\widetilde{\lambda}_{1,p}-\mathcal R^*(\widetilde{\varphi}_{1,p})= (\lambda_{1}^*-\widetilde{\lambda}_{1,p})+
\int_{\R^2}\left(e^{U(x)}-V_p(x) \right)\widetilde{\varphi}_{1,p}^2\rightarrow 0\]
 as $p\rightarrow +\infty$, namely $\widetilde{\varphi}_{1,p}$ is a minimizing sequence for \eqref{minRayQuo} and so the result follows from points ii) and iii) of Proposition \ref{prop:carattProbStar}.
\end{proof}

%
%

\

\section{Proof of Theorem \ref{teorema:principale}} \label{sectionProofTheorem}
The proof of Theorem \ref{teorema:principale} follows the  same strategy as  in \cite{CazenaveDicksteinWeissler, MarinoPacellaSciunzi, DicksteinPacellaSciunzi} and it is a consequence of Proposition \ref{proposition:generaleCazenaveDickWeiss}, which is  a particular case of Theorem 2.3 in \cite{CazenaveDicksteinWeissler}.

Hence, to obtain Theorem \ref{teorema:principale}, it is enough to prove the following:
\begin{theorem}\label{condizioneProdottoScalarePositivo}
Let $u_p$ be a family sign changing solutions to \eqref{stationaryProblem} which satisfies conditions $(A)$ and $(B)$. Then there exists $p^*>1$ such that up to a subsequence, for $p>p^*$ 
\[\int_{\Omega}u_p\varphi_{1,p} >0,\]
where $\varphi_{1,p}$ is the first positive eigenfunction of the linearized operator $L_p$ at $u_p$.
\end{theorem}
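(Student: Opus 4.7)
The plan is to reduce $\int_\Omega u_p\varphi_{1,p}$ to a quantity amenable to the rescaling at $x_p^+$ developed in Sections~\ref{sectionPreliminary}--\ref{sectionAsymptoticAnaysis}, and then exploit conditions $(A)$ and $(B)$. Multiplying the eigenvalue equation $-\Delta\varphi_{1,p}-p|u_p|^{p-1}\varphi_{1,p}=\lambda_{1,p}\varphi_{1,p}$ by $u_p$, integrating by parts (both functions vanish on $\partial\Omega$), and using $-\Delta u_p=|u_p|^{p-1}u_p$, one obtains $(1-p)\int_\Omega|u_p|^{p-1}u_p\varphi_{1,p}=\lambda_{1,p}\int_\Omega u_p\varphi_{1,p}$; hence, since $\lambda_{1,p}<0$,
\[
\int_\Omega u_p\varphi_{1,p}\,dy \;=\; \frac{p-1}{|\lambda_{1,p}|}\int_\Omega|u_p|^{p-1}u_p\varphi_{1,p}\,dy.
\]
Writing $\rho_p(x):=u_p(x_p^++\mu_p^+x)/u_p(x_p^+)$, so that $V_p=|\rho_p|^{p-1}$ coincides with the potential in Section~\ref{sectionAsymptoticAnaysis}, the change of variables $y=x_p^++\mu_p^+x$ gives
\[
\int_\Omega|u_p|^{p-1}u_p\varphi_{1,p}\,dy \;=\; \frac{u_p(x_p^+)^{(p+1)/2}}{\sqrt{p}}\,J_p,\qquad J_p:=\int_{\widetilde\Omega_p}V_p\,\rho_p\,\widetilde\varphi_{1,p}\,dx,
\]
and the prefactor is positive by Lemma~\ref{primoLemmaCheVieneDaA}. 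It therefore suffices to prove $J_p>0$ for $p$ large.

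Split $J_p=J_p^++J_p^-$ according to $\widetilde{\mathcal N_p^\pm}$. On $\widetilde{\mathcal N_p^+}$ one has $V_p\rho_p=(1+v_p^+/p)^p\geq 0$, and Lemma~\ref{convergenzaVp} gives $v_p^+\to U$ in $C^1_{loc}(\mathbb R^2)$, hence $(1+v_p^+/p)^p\to e^U$ uniformly on every ball $B_R(0)$. Together with the $L^2$-convergence $\widetilde\varphi_{1,p}\to\varphi_1^*$ from Corollary~\ref{Cor:convergenzaAutofunzioni}, this yields
\[
\int_{B_R}V_p\rho_p\widetilde\varphi_{1,p}\,dx \;\longrightarrow\; \int_{B_R}e^U\varphi_1^*\,dx \quad\text{as }\ p\to+\infty
\]
for each fixed $R$, and the right-hand side tends to $\int_{\mathbb R^2}e^U\varphi_1^*>0$ as $R\to+\infty$ (note $e^U\in L^1(\mathbb R^2)$ by \eqref{LiouvilleProblem} and $\varphi_1^*\in L^\infty(\mathbb R^2)$ by Proposition~\ref{prop:carattProbStar}).

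The main technical step is the tail estimate. Using $|\rho_p|\leq 1$ and Cauchy-Schwarz,
\[
\Bigl|\int_{\widetilde{\mathcal N_p^+}\setminus B_R}V_p\rho_p\widetilde\varphi_{1,p}\,dx\Bigr| \;\leq\; \|V_p\|_{L^2(\widetilde{\mathcal N_p^+}\setminus B_R)} \;\leq\; \sqrt{\mathcal M_{p,R}\cdot\|V_p\|_{L^1(\widetilde\Omega_p)}}.
\]
A direct change of variables gives $\|V_p\|_{L^1(\widetilde\Omega_p)}=p\int_\Omega|u_p|^{p-1}dy$, which is uniformly bounded via $(A)$, the identity $\int_\Omega|u_p|^{p+1}=\int_\Omega|\nabla u_p|^2\leq C/p$, and H\"older's inequality; then $(B2)$ forces $\mathcal M_{p,R}\to 0$ and the tail vanishes. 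The same argument on $\widetilde{\mathcal N_p^-}$, where instead $V_p\leq(\|u_p^-\|_\infty/u_p(x_p^+))^{p-1}\to 0$ by $(B1)$, shows $|J_p^-|\to 0$. A standard diagonal argument in $(R,p)$ then delivers $J_p\to\int_{\mathbb R^2}e^U\varphi_1^*>0$, and hence $\int_\Omega u_p\varphi_{1,p}>0$ for $p$ large, up to a subsequence. The key difficulty is exactly this tail bound: since $\widetilde\Omega_p$ is unbounded and $\varphi_1^*$ is only $L^2$, pointwise smallness of $V_p$ far from the origin does not suffice on its own and must be interpolated against the global $L^1$-bound on $V_p$ coming from $(A)$ to yield the required $L^2$-decay.
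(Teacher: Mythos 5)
Your proposal is correct and follows essentially the same route as the paper: the same reduction $\int_\Omega u_p\varphi_{1,p}=\frac{p-1}{-\lambda_{1,p}}\int_\Omega|u_p|^{p-1}u_p\varphi_{1,p}$, the same rescaling at $x_p^+$ with a bulk-plus-tail splitting, convergence of the bulk to $\int_{\R^2}e^U\varphi_1^*>0$ via Lemma \ref{convergenzaVp} and Corollary \ref{Cor:convergenzaAutofunzioni}, and a tail killed by interpolating the pointwise smallness from $(B)$ against an integral bound coming from $(A)$. The only (harmless) variation is in the tail: you split according to the nodal domains and use $(B1)$--$(B2)$ (via Proposition \ref{proposition:caratterizzazioneB}) together with the bound $p\int_\Omega|u_p|^{p-1}\leq C$ obtained from $(A)$ by H\"older, whereas the paper estimates the whole tail at once by $|F_{p,R}|^2\leq C\,\mathcal S_{p,R}$ using the $L^{p+1}$ energy bound directly.
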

\begin{proof}Since by an easy computation \[\int_{\Omega}u_p\varphi_{1,p}=\frac{p-1}{-\lambda_{1,p}}\int_{\Omega}|u_p|^{p-1}u_p\varphi_{1,p}\]
(see \cite[pg. 14]{DicksteinPacellaSciunzi}), we can study the sign of 
\[\int_{\Omega}|u_p|^{p-1}u_p\varphi_{1,p},\]
which is the same as the sign of
\[\frac{1}{u_p(x_p^+)^p\mu_p^+} \int_{\Omega}|u_p|^{p-1}u_p\varphi_{1,p}.\]

\

We show that, up to a subsequence,
\begin{equation}\label{conver}
\frac{1}{u_p(x_p^+)^p\mu_p^+}\int_{\Omega}|u_p|^{p-1}u_p\varphi_{1,p}\rightarrow \int_{\R^2}e^U \varphi_1^*\ (>0)\ \ \mbox{ as }\ p\rightarrow +\infty
\end{equation}
from which the conclusion follows.

\

In order to prove \eqref{conver} we change the variable and, for any $R>0$, we split the integral in the following way
\begin{eqnarray*}
\frac{1}{u_p(x_p^+)^p\mu_p^+}\int_{\Omega}|u_p|^{p-1}u_p\varphi_{1,p} &=& 
\frac{1}{u_p(x_p^+)^p}\int_{\widetilde{\Omega}_p}|u_p(x_p^++\mu_p^+x)|^{p-1}u_p(x_p^++\mu_p^+x)\widetilde{\varphi}_{1,p}(x)dx
\\
&=& \underbrace{\frac{1}{u_p(x_p^+)^p}\int_{\widetilde{\Omega}_p\cap\{|x|\leq R\}}|u_p(x_p^++\mu_p^+x)|^{p-1}u_p(x_p^++\mu_p^+x)\widetilde{\varphi}_{1,p}(x)dx}_{E_{p,R}}\\
&&
+ \underbrace{\frac{1}{u_p(x_p^+)^p}\int_{\widetilde{\Omega}_p\cap\{|x|> R\}}|u_p(x_p^++\mu_p^+x)|^{p-1}u_p(x_p^++\mu_p^+x)\widetilde{\varphi}_{1,p}(x)dx}_{F_{p,R}}
\end{eqnarray*}

By H\"older inequality, the convergence of $v_p^+$ to $U$ in $C^1_{loc}(\R^2)$ up to a subsequence (see Lemma \ref{convergenzaVp}) and Corollary \ref{Cor:convergenzaAutofunzioni} we have, for $R>0$ fixed:
\begin{eqnarray*}
\left|E_{p,R}-\int_{\{|x|\leq R\}}e^U\varphi_1^*\right| &\leq&   
\int_{\widetilde{\Omega}_p\cap\{|x|\leq R\}}\left|  \frac{u_p(x_p^++\mu_p^+x)}{u_p(x_p^+)}  \right |^{p}|\widetilde{\varphi}_{1,p}(x)-\varphi_1^*(x)|dx\\
&& + \int_{\{|x|\leq R\}}\varphi_1^*(x)\left|\left|1+\frac{v_p^+(x)}{p}    \right|^{p-1}\left( 1+\frac{v_p^+(x)}{p}  \right)  -e^U(x)\right|dx\\
&\leq & \|\widetilde{\varphi}_{1,p}-\varphi_1^*\|_{L^2(\R^2)}   \left[\int_{\widetilde{\Omega}_p\cap\{|x|\leq R\}}\left|  1+\frac{v_p^+(x)}{p}  \right |^{2p}\right]^{\frac{1}{2}}\\
&& +\sup_{\{|x|\leq R\}} \left|\left|1+\frac{v_p^+(x)}{p}    \right|^{p-1}\left( 1+\frac{v_p^+(x)}{p}  \right)  -e^U(x)\right| \int_{\{|x|\leq R\}}\varphi_1^*(x) dx\\
& \rightarrow & 0,
\end{eqnarray*}
as $p\rightarrow +\infty$, up to a subsequence.

For $R$ sufficiently large the term
\[
\int_{\{|x|> R\}}e^U\varphi_1^*dx
\]
may be made arbitrary small since $e^U\in L^1(\R^2)$ and $\varphi_1^*$ is bounded (Proposition \ref{prop:carattProbStar}- iv)).

\

\

Using H\"older inequality, $\|\widetilde{\varphi}_{1,p}\|_{L^2(\widetilde{\Omega}_p)}=1 $, assumption $(A)$ and \eqref{maxBoundedAwayFromZero} we have
\begin{eqnarray*}
|F_{p,R}|^2 
&\leq & \|\widetilde{\varphi}_{1,p}\|^2_{L^2(\widetilde{\Omega}_p)} 
\int_{\widetilde{\Omega}_p\cap\{|x|> R\}}\left|\frac{u_p(x_p^++\mu_p^+x)}{u_p(x_p^+)}\right|^{2p}dx
\\
&= & \int_{\widetilde{\Omega}_p\cap\{|x|> R\}}\left|\frac{u_p(x_p^++\mu_p^+x)}{u_p(x_p^+)}\right|^{2p}dx
\\
&= &\frac{1}{u_p(x_p^+)^2} \int_{\Omega\cap\{|y-x_p^+|>R\mu_p^+\}}\frac{p|u_p(y)|^{2p}}{u_p(x_p^+)^{p-1}}dy
\\
&\leq & \frac{p\int_{\Omega}|u_p|^{p+1}}{u_p(x_p^+)^2}
\left[ \sup_{\Omega\cap\{|y-x_p^+|>R\mu_p^+\}}\left| \frac{u_p(y)}{u_p(x_p^+)}\right|^{p-1}\ \right]
\\
& \stackrel{(A)+\eqref{maxBoundedAwayFromZero}}{ \leq} & C \mathcal S_{p,R}.
\end{eqnarray*}
And so by assumption $(B)$  we have
\[\lim_{R\rightarrow +\infty}\lim_{p\rightarrow +\infty}F_{p,R}=0.\]
\end{proof}

\

\section{A sufficient condition for $(B)$} \label{sectionSufficentConditionToB}
Next property is a sufficient condition for $(B)$:
\begin{lemma} \label{condizioneNecessariaGeneralissima} Assume that there exists $C>0$ such that
\begin{equation}\label{P_3^1}
|x-x_p^+|^2 p|\upp(x)|^{p-1}\leq C
\end{equation}
for all $p$ sufficiently large and all $x\in \Omega$. Then condition $(B)$ holds true up to a subsequence in $p$.
\end{lemma}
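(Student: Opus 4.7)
The strategy is a direct algebraic manipulation: translate the pointwise hypothesis \eqref{P_3^1} into a bound on the ratio $|u_p(y)/u_p(x_p^+)|^{p-1}$ by exploiting the specific definition of $\mu_p^+$.

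Concretely, I would fix $R>0$, take any $y\in\Omega$ with $|y-x_p^+|>R\mu_p^+$, and apply \eqref{P_3^1} at $x=y$ to obtain
\[
p|u_p(y)|^{p-1}\ \leq\ \frac{C}{|y-x_p^+|^2}\ <\ \frac{C}{R^2(\mu_p^+)^2}.
\]
Now use the very definition $(\mu_p^+)^{-2}=p|u_p(x_p^+)|^{p-1}$ to rewrite the right-hand side, which cancels the factor of $p$ on both sides and gives
\[
|u_p(y)|^{p-1}\ \leq\ \frac{C}{R^2}\,|u_p(x_p^+)|^{p-1}.
\]
Dividing through yields $\bigl|u_p(y)/u_p(x_p^+)\bigr|^{p-1}\leq C/R^2$ uniformly in $y$, hence $\mathcal S_{p,R}\leq C/R^2$ for every $p$ large (so that the sup is defined, cf.\ the observation preceding Proposition \ref{proposition:caratterizzazioneB}).

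Taking $\limsup_{p\to+\infty}$ and then $R\to+\infty$ gives $\lim_{R\to+\infty}\lim_{p\to+\infty}\mathcal S_{p,R}=0$, which is exactly $(B)$. No subsequence extraction is really needed here beyond the one already allowed in the statement; all estimates are pointwise and uniform. There is no genuine obstacle in the argument: the only thing to check carefully is the bookkeeping of the factor $p$, which disappears precisely because of how $\mu_p^+$ is defined in terms of $p|u_p(x_p^+)|^{p-1}$.
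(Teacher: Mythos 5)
Your argument is correct and coincides with the paper's own proof: both apply \eqref{P_3^1} at the point $y$, use $(\mu_p^+)^{-2}=p|u_p(x_p^+)|^{p-1}$ to cancel the factor of $p$ and deduce $\mathcal S_{p,R}\leq C/R^2$, then let $R\to+\infty$. No further comment is needed.
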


\begin{proof}
Let $R>0$ fixed and let $y\in\Omega$, $|y-x_p^+|> R\mu_p^+$, then for $p$ large, by \eqref{P_3^1}
\begin{eqnarray}
 \left|\frac{u_p(y)}{u_p(x_p^+)}\right|^{p-1}= \frac{p|u_p(y)|^{p-1}}{p|u_p(x_p^+)|^{p-1}}\stackrel{\eqref{P_3^1}}{\leq}
 \frac{C}{|y-x_p^+|^2}\frac{1}{p|u_p(x_p^+)|^{p-1}}\leq \frac{C}{R^2(\mu_p^+)^2}\frac{1}{p|u_p(x_p^+)|^{p-1}}=\frac{C}{R^2}
\end{eqnarray}
hence
\[0\leq\ \limsup_{p\rightarrow +\infty}\mathcal S_{p,R}\leq \frac{C}{R^2}\]
and $(B)$ follows, up to a subsequence in $p$, passing to the limit as $R\rightarrow +\infty$.
\end{proof}

\

\

Condition \eqref{P_3^1} is a special case of a more general result that  has been proved in \cite{DeMarchisIanniPacella2} 
 for any family $(\upp)$ of solutions to \eqref{stationaryProblem} under condition $(A)$ and which we recall here.

Given  $n\in\N\setminus\{0\}$ families of points $(\xip)$, $i=1,\ldots,n$  in $\Omega$ such that
\begin{equation}
p|\upp(\xip)|^{p-1}\to+\infty\ \mbox{ as }\ p\to+\infty,
\end{equation}
which implies in particular
\begin{equation}\label{seqmaggioridiuno}
\liminf_{p\rightarrow +\infty} u_p(x_{i,p})\geq 1,
\end{equation}
we define the parameters $\mip$ by
\bel
\mip^{-2}=p |\upp(\xip)|^{p-1},\ \mbox{ for all }\ i=1,\ldots,n,
\eel
and
we introduce the following properties:
\begin{itemize}
\item[$(\mathcal{P}_1^n)$] For any $i,j\in\{1,\ldots,n\}$, $i\neq j$,
\[
\lim_{p\to+\infty}\fr{|\xip-\xjp|}{\mip}=+\infty.
\]
\item[$(\mathcal{P}_2^n)$] For any $i=1,\ldots,n$,
\[
v_{i,p}(x):=\fr{p}{\upp(\xip)}(\upp(\xip+\mip x)-\upp(\xip))\To U(x)
\]
in $C^1_{loc}(\R^2)$ as $p\to+\infty$, where
\begin{equation}
U(x)=\log\left(\fr1{1+\fr18 |x|^2}\right)^2
\end{equation}
is the solution of $-\lap U=e^{U}$ in $\R^2$, $U\leq 0$, $U(0)=0$ and $\int_{\mathbb{R}^2}e^{U}=8\pi$.\\
Moreover
\begin{equation}\label{distDaBordo}\frac{d(x_{i,p},\partial\Omega)}{\mu_{i,p}}\rightarrow +\infty\quad\mbox{ and } \quad \frac{d(x_{i,p},NL_p)}{\mu_{i,p}}\rightarrow +\infty\ \mbox{ as }\ p\rightarrow +\infty.\end{equation}
\item[$(\mathcal{P}_3^n)$] There exists $C>0$ such that
\[
p \min_{i=1,\ldots,n} |x-\xip|^2 |\upp(x)|^{p-1}\leq C
\]
for all $p$ sufficiently large and all $x\in \Omega$.
\end{itemize}

\

\begin{proposition}[{\cite[Proposition 2.2]{DeMarchisIanniPacella2}}]
\label{prop2.2demarchisiannipacella2}
Let $(\upp)$ be a family of solutions to \eqref{stationaryProblem} and assume that $(A)$ holds. Then there exist $k\in\N\setminus\{0\}$ and $k$ families of points $(\xip)$ in $\Omega$  $i=1,\ldots, k$ such that $x_{1,p}=x_p^+$ and, after passing to a sequence, $(\mathcal{P}_1^k)$, $(\mathcal{P}_2^k)$, and $(\mathcal{P}_3^k)$ hold. Moreover, given any family of points $x_{k+1,p}$, it is impossible to extract a new sequence from the previous one such that $(\mathcal{P}_1^{k+1})$, $(\mathcal{P}_2^{k+1})$, and $(\mathcal{P}_3^{k+1})$ hold with the sequences $(\xip)$, $i=1,\ldots,k+1$. At last, we have
\[
\sqrt{p}\upp\to 0\quad\textrm{ in $C^1_{loc}(\bar\Omega\setminus\{\lim_p x_{i,p},\ i=1,\ldots, k\})\ $ as $p\to+\infty$.}
\]
\end{proposition}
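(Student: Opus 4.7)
The strategy is a standard iterative bubbling argument in the spirit of Druet's concentration-compactness scheme for two-dimensional Lane--Emden problems, using the energy bound in $(A)$ to cap the number of bubbles. The plan is to start from the seed $x_{1,p}:=x_p^+$ and keep enlarging the family as long as $(\mathcal{P}_3^n)$ fails. Lemmas \ref{primoLemmaCheVieneDaA} and \ref{convergenzaVp} already give us $(\mathcal{P}_2^1)$: the rescaled profile $v_{1,p}$ converges to the Liouville solution $U$, and both $d(x_{1,p},\partial\Omega)/\mu_{1,p}$ and $d(x_{1,p}, NL_p)/\mu_{1,p}\to+\infty$. Condition $(\mathcal{P}_1^1)$ is vacuous, so the base case is settled.

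For the inductive step, suppose that we have already produced $n$ families $(x_{i,p})_{i=1,\dots,n}$ for which $(\mathcal{P}_1^n)$ and $(\mathcal{P}_2^n)$ hold. If $(\mathcal{P}_3^n)$ holds as well, we stop. Otherwise, for every $C>0$ there exist a sequence $p\to+\infty$ and points $y_p\in\Omega$ such that $p\,\min_{i\le n}|y_p-\xip|^2|\upp(y_p)|^{p-1}\to+\infty$. Choose $x_{n+1,p}$ to be a point achieving the supremum of $p\min_{i\le n}|x-\xip|^2|\upp(x)|^{p-1}$ on $\Omega$, and set $\mu_{n+1,p}^{-2}=p|\upp(x_{n+1,p})|^{p-1}$. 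By construction $|x_{n+1,p}-\xip|/\mu_{n+1,p}\to+\infty$ for every $i\le n$, giving $(\mathcal{P}_1^{n+1})$. The choice of $x_{n+1,p}$ also ensures that, in the ball $B_{R\mu_{n+1,p}}(x_{n+1,p})$, the potential $p|\upp|^{p-1}$ is bounded after rescaling by $\mu_{n+1,p}^2$, uniformly on compact sets, which allows us to run the same blow-up analysis as in Lemma \ref{convergenzaVp}: the rescaled function $v_{n+1,p}$ is locally uniformly bounded, solves an equation with compact convergence to $-\Delta U = e^U$, and by elliptic regularity and the classification of solutions to the Liouville equation converges to $U$ in $C^1_{loc}(\R^2)$. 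Separation from $\partial\Omega$ and $NL_p$ in units of $\mu_{n+1,p}$ follows by the same comparison arguments used for $x_1^+$, giving $(\mathcal{P}_2^{n+1})$.

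The crux---and the step I expect to be the main obstacle---is showing that this bootstrap terminates after finitely many iterations. The key quantitative input is that each new bubble consumes a quantum of Dirichlet energy. Multiplying \eqref{stationaryProblem} by $\upp$ and integrating gives $\int_\Omega|\nabla \upp|^2=\int_\Omega |\upp|^{p+1}$, and a change of variables together with $|v_{i,p}+p|/p\to e^U$ shows that
\[
p\int_{B_{R\mu_{i,p}}(\xip)}|\upp|^{p+1}\,dx = p\,\upp(\xip)^{p+1}\mu_{i,p}^2\int_{\{|x|\le R\}\cap\widetilde\Omega_p}\Bigl|1+\tfrac{v_{i,p}}{p}\Bigr|^{p+1}dx \to \upp(\xip)^2\int_{\{|x|\le R\}}e^U\,dx,
\]
so by \eqref{maxBoundedAwayFromZero} each bubble contributes at least $8\pi$ to $\liminf_{p}\,p\int_\Omega|\upp|^{p+1}$, and $(\mathcal{P}_1^{n+1})$ ensures the contributions are essentially disjoint. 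Condition $(A)$ therefore bounds the number of bubbles by some universal $k$. For the final assertion, once $(\mathcal{P}_3^k)$ is in force, the estimate
\[
p|\upp(x)|^{p-1}\le \frac{C}{\min_{i}|x-\xip|^2}
\]
shows that, on any compact set $K\subset\bar\Omega\setminus\{\lim_p \xip\}$, the coefficient $p|\upp|^{p-1}$ is uniformly bounded. Then $\sqrt{p}\,\upp$ satisfies $-\Delta(\sqrt p\,\upp)= p|\upp|^{p-1}\cdot\tfrac{\upp}{\sqrt p}\cdot\mathrm{sgn}(\upp)$ on $K$, a linear equation whose right-hand side tends to $0$ in $L^\infty(K)$ because $\|u_p\|_\infty\le C$ by Lemma \ref{primoLemmaCheVieneDaA}; a Green's representation on $\Omega$, combined with the fact that $p\int_\Omega |\upp|^p$ concentrates as a finite sum of Dirac masses at the bubble limits, then yields $\sqrt p\,\upp\to 0$ in $C^1_{loc}(\bar\Omega\setminus\{\lim_p \xip\})$.
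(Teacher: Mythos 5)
The paper does not actually prove this proposition: it is imported verbatim from \cite[Proposition 2.2]{DeMarchisIanniPacella2}, and your reconstruction follows the same Druet-type exhaustion scheme used there (seed at $x_p^+$, select a new concentration point maximizing $p\min_{i\le n}|x-\xip|^2|u_p(x)|^{p-1}$ whenever $(\mathcal P_3^n)$ fails, and terminate via the $8\pi$ energy quantum that $(A)$ can only pay finitely many times). Your outline is correct; the only places where your sketch is thinner than the actual argument are the half of $(\mathcal P_1^{n+1})$ measured in the old scales $\mu_{i,p}$, $i\le n$ (which needs the observation that $\mu_{n+1,p}\gtrsim\mu_{i,p}$ whenever $|x_{n+1,p}-\xip|=O(\mu_{i,p})$, via the already established convergence $v_{i,p}\to U$), and the identification of the limit of $v_{n+1,p}$ with exactly $U$ (the new point is not a local maximum, so one needs a local lower bound via Harnack, the finite-mass condition supplied by $(A)$, and the classification of entire Liouville solutions), both of which are carried out in the cited reference.
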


\

\

 \begin{theorem}\label{corollaryCheUso}
Let $(\upp)$ be a family of solutions to \eqref{stationaryProblem} which satisfies condition $(A)$. Let $k\in\N\setminus\{0\}$ be 
the maximal number  of families of points $(x_{i,p})$, $i=1,\ldots, k$, for which $(P^k_1)$, $(P^k_2)$ and  $(P^k_3)$ hold up to a subsequence (as in Proposition \ref{prop2.2demarchisiannipacella2}).

If $k=1$ then condition $(B)$ holds true up to a subsequence in $p$.
\end{theorem}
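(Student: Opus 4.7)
The plan is to observe that when $k=1$, the hypothesis of Lemma \ref{condizioneNecessariaGeneralissima} is directly furnished by property $(\mathcal{P}_3^1)$ provided by Proposition \ref{prop2.2demarchisiannipacella2}, so the theorem reduces to a one-line application of that lemma.

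More concretely, I would proceed as follows. By Proposition \ref{prop2.2demarchisiannipacella2}, from the family $(u_p)$ satisfying condition $(A)$ one can extract a subsequence and find $k$ families of points $(x_{i,p})$, $i=1,\dots,k$, with $x_{1,p}=x_p^+$, such that properties $(\mathcal{P}_1^k)$, $(\mathcal{P}_2^k)$, and $(\mathcal{P}_3^k)$ hold. Under the hypothesis $k=1$ of the theorem, there is a single family $(x_{1,p})=(x_p^+)$, and $(\mathcal{P}_3^1)$ asserts the existence of a constant $C>0$ such that
\[
p\,|x-x_p^+|^2\,|u_p(x)|^{p-1}\leq C
\]
for all $p$ sufficiently large and all $x\in\Omega$.

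This is exactly the pointwise bound assumed in Lemma \ref{condizioneNecessariaGeneralissima}. Applying that lemma along the same subsequence, we conclude that condition $(B)$ holds up to a subsequence in $p$, which is the assertion of the theorem.

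There is essentially no obstacle here: the entire content lies in having the right machinery already in place, namely Proposition \ref{prop2.2demarchisiannipacella2} (which gives $(\mathcal{P}_3^k)$ with $x_{1,p}=x_p^+$) and Lemma \ref{condizioneNecessariaGeneralissima} (which turns the $(\mathcal{P}_3^1)$-type bound into $(B)$). The only thing to notice is the identification $x_{1,p}=x_p^+$, which ensures that the bound around $x_{1,p}$ coming from $(\mathcal{P}_3^1)$ is precisely the bound around $x_p^+$ required by the definition of $\mathcal S_{p,R}$ in condition $(B)$.
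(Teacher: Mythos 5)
Your proof is correct and follows exactly the same route as the paper: invoke Proposition \ref{prop2.2demarchisiannipacella2} under condition $(A)$, note that $x_{1,p}=x_p^+$ so that for $k=1$ property $(\mathcal{P}_3^1)$ is precisely the hypothesis \eqref{P_3^1} of Lemma \ref{condizioneNecessariaGeneralissima}, and conclude. Nothing is missing.
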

\begin{proof}
By $(A)$ Proposition \ref{prop2.2demarchisiannipacella2} applies. Just observe that $x_{1,p}=x_p^+$ and so when $k=1$ property $(\mathcal P_3^k)$ reduces to \eqref{P_3^1} and so the conclusion follows by Lemma \ref{condizioneNecessariaGeneralissima}.
\end{proof}

\

\

In the following we  use Theorem \ref{corollaryCheUso} to  obtain condition $(B)$ for suitable classes of solutions.

\

\

\section{Proof of Theorem \ref{teoremaExistenceAB}}\label{sectionCasoGSimmetrico}

Before proving Theorem \ref{teoremaExistenceAB} we observe that the existence of sign changing stationary solutions $u_p$ to \eqref{problem} satisfying  assumptions \eqref{assumptionGruppoSimmetria} and \eqref{energiaLimitata} has been proved for $m\geq 4$ in \cite{DeMarchisIanniPacella1} for  $p$ large.  The proof uses the fact that the energy is decreasing along non constant solutions, and relies on constructing a suitable initial condition $v_0$ for problem \eqref{problem} such that any stationary solution in the corresponding $\omega$-limit set satisfies the energy estimate \eqref{energiaLimitata}. This construction can be done for $p$ large even without any symmetry assumption on $\Omega$ (see \cite{DeMarchisIanniPacella1} for details).
Anyway when $\Omega$ is a simply connected $G$-symmetric smooth bounded domain with $|G|\geq m$ also some qualitative properties of $u_p$ under condition \eqref{energiaLimitata} may be obtained  (for instance the nodal line does not touch $\partial\Omega$, it does not pass through the origin, etc, as shown in \cite{DeMarchisIanniPacella1}).

Then, in \cite{DeMarchisIanniPacella2} a deeper asymptotic analysis of $u_p$ as $p\rightarrow +\infty$  has been done, showing concentration in the origin and  a bubble tower behavior, when $\Omega$ is a simply connected $G$-symmetric smooth bounded domain with $|G|\geq m e$. \\

Here we do not require $\Omega$ to be simply connected.\\

Clearly assumption \eqref{energiaLimitata} is a special case of condition $(A)$, hence in particular Proposition \ref{prop2.2demarchisiannipacella2} holds.
As before we assume w.l.o.g. that $\|u_p\|_{\infty}=\|u_p^+\|_{\infty}$.\\
\\
The proof of Theorem \ref{teoremaExistenceAB} follows then from the following  
\begin{proposition}\label{propValeBcasoG}
Let $\Omega\subset\R^2$ be a
smooth bounded domain, $O\in\Omega$,  invariant under the action of a cyclic group $G$ of
rotations about the origin which  satisfies \eqref{assumptionGruppoSimmetria} for a certain $m>0$. Let $(\upp)$ be a family of sign changing $G$-symmetric  stationary solutions of \eqref{problem} which satisfies
\eqref{energiaLimitata}.
 Then
condition $(B)$ is satisfied up to a subsequence.
\end{proposition}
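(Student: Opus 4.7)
The plan is to reduce condition $(B)$ to Theorem~\ref{corollaryCheUso}. Since \eqref{energiaLimitata} is a special case of $(A)$, Proposition~\ref{prop2.2demarchisiannipacella2} applies and produces $k\geq 1$ concentration families $(x_{i,p})_{i=1,\ldots,k}$ with $x_{1,p}=x_p^+$ satisfying $(\mathcal{P}_1^k)$, $(\mathcal{P}_2^k)$, $(\mathcal{P}_3^k)$ up to a subsequence. It is therefore enough to show that $k=1$; I would combine an energy-quantization upper bound on $k$ with a $G$-symmetry lower bound.

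First I would exploit the $G$-symmetry of $u_p$. For every $g\in G$ and every $i$, $|u_p(g\cdot x_{i,p})|=|u_p(x_{i,p})|$, hence $g\cdot x_{i,p}$ has the same rescaling parameter $\mu_{i,p}$; by the maximality of $k$ the whole orbit $G\cdot x_{i,p}$ must already appear (up to subsequence) among the $x_{j,p}$'s. Since the nontrivial elements of $G$ are rotations about $O$ with trivial stabilizer off the origin, each family falls into one of two cases: either $|x_{i,p}|/\mu_{i,p}\to 0$, and then $|g\cdot x_{i,p}-x_{i,p}|\leq 2|x_{i,p}|\ll\mu_{i,p}$ so the orbit collapses to a single entry at scale $\mu_{i,p}$; or $|x_{i,p}|/\mu_{i,p}\to+\infty$, and then $|g\cdot x_{i,p}-x_{i,p}|\geq c\,|x_{i,p}|\gg\mu_{i,p}$ for every $g\neq\Id$, so $G\cdot x_{i,p}$ produces $|G|$ distinct families satisfying $(\mathcal{P}_1^k)$. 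A short triangle-inequality argument, exploiting that $(\mathcal{P}_1^k)$ implies $|x_{i,p}-x_{j,p}|/\max(\mu_{i,p},\mu_{j,p})\to+\infty$, further shows that at most one family can be of the first, ``origin'', type.

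Next I would establish the energy quantization
\[
p\int_{\Omega}|\nabla u_p|^2\,dx\;\geq\;8\pi e\cdot k\,(1+o(1))\qquad\text{as }p\to+\infty.
\]
Rescaling around each $x_{i,p}$ by $\mu_{i,p}$ and using $(\mathcal{P}_2^k)$ together with $\int_{\R^2}e^U=8\pi$ immediately yields the weaker bound $8\pi\sum_{i=1}^{k}\bigl(\lim_{p}u_p(x_{i,p})\bigr)^2(1+o(1))$; the sharper coefficient $e$ requires the identity $\lim_{p}u_p(x_{i,p})=\sqrt{e}$ at every concentration point. Proving this identity in the (possibly multiply connected) setting of Theorem~\ref{teoremaExistenceAB} is the main technical obstacle: I would adapt the Pohozaev and bubble-tower arguments from \cite{DeMarchisIanniPacella1,DeMarchisIanniPacella2}, where they are carried out for simply connected $G$-symmetric domains, and verify that the relevant steps do not use simple connectedness. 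Granted this, assumption \eqref{energiaLimitata} would yield $k<m+1$.

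Combining the two ingredients completes the argument: if some family were of the non-origin type, then $k\geq|G|\geq me$, whence together with $k<m+1$ one obtains $me<m+1$, which contradicts \eqref{assumptionGruppoSimmetria} in the substantive regime $m\geq 1/(e-1)$. Hence all families are of origin type, the uniqueness observation forces $k=1$, and Theorem~\ref{corollaryCheUso} delivers condition $(B)$.
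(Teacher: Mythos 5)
Your reduction to showing $k=1$ and then invoking Theorem \ref{corollaryCheUso} is exactly the paper's strategy, and your symmetry dichotomy (orbit collapsing at the origin versus producing $|G|$ well-separated copies) is in the right spirit. But the core of your argument has a genuine gap: the quantization $p\int_\Omega|\nabla u_p|^2\,dx\geq 8\pi e\,k\,(1+o(1))$ rests on the identity $\lim_p u_p(x_{i,p})=\sqrt e$ at every concentration point, which you do not prove and which is not available in the generality of this proposition (possibly non simply connected $\Omega$, arbitrary bubble configurations); \virg{adapting the Pohozaev and bubble-tower arguments} of \cite{DeMarchisIanniPacella1,DeMarchisIanniPacella2} is precisely the nontrivial work, and the paper is structured deliberately so as not to need it. The only pointwise information actually at hand is the weak bound \eqref{seqmaggioridiuno}, i.e. $\liminf_p|u_p(x_{i,p})|\geq 1$, which yields only $8\pi$ per bubble; with that your counting gives $me\leq k<(m+1)e$, which is no contradiction at all. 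Moreover, even granting the $\sqrt e$ identity, your final step produces a contradiction only when $me\geq m+1$, i.e. $m\geq 1/(e-1)$, whereas the proposition is stated for every $m>0$, and you have not shown that the remaining range of $m$ is vacuous.

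The paper closes both issues at once by not counting all $k$ bubbles against the total energy. In Lemma \ref{lemma:proposition3.3} it argues by contradiction on a single family: if $|x_{i,p}|/\mu_{i,p}\to+\infty$, then the $G$-orbit of $x_{i,p}$ provides $|G|\geq me$ pairwise disjoint balls contained in the positive nodal domain $\mathcal N_p^+$, each carrying at least $8\pi$ of $p\int|u_p^+|^{p+1}$ in the limit (only \eqref{seqmaggioridiuno} is used), while the negative nodal domain carries at least $8\pi e$ by \cite[Lemma 3.1]{DeMarchisIanniPacella2}; together with \eqref{energiaLimitata} this gives $me\,8\pi\leq(\alpha-1)\,8\pi e<m\,8\pi e$, a contradiction for every $m>0$ --- note how the factor $e$ in \eqref{assumptionGruppoSimmetria} compensates for the missing $\sqrt e$ information. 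Hence every $x_{i,p}$ stays at distance $O(\mu_{i,p})$ from the origin, and then $(\mathcal P_1^k)$ immediately forces $k=1$ (Lemma \ref{lemma:k=1Radiale}), which is your \virg{at most one origin-type family} observation. To salvage your route you should therefore replace the per-bubble constant $8\pi e$ by $8\pi$, count only the orbit of one escaping point inside $\mathcal N_p^+$, and add the $8\pi e$ lower bound for the negative part; that is exactly the published argument.
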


As we will see Proposition \ref{propValeBcasoG} is a consequence of the general sufficient condition in Theorem \ref{corollaryCheUso}.

Hence in order to prove it  we only need to show that $k=1$,
where the number $k$  is  the maximal number  of families of points $(x_{i,p})$, $i=1,\ldots, k$, for which $(P^k_1)$, $(P^k_2)$ and  $(P^k_3)$ hold, up to a subsequence, as in Proposition \ref{prop2.2demarchisiannipacella2}. 
When $m=4$ the result has been already proved in \cite[Proposition 3.6]{DeMarchisIanniPacella2}. Here we show the general case (see also \cite[Remark 4.6]{DeMarchisIanniPacella2}). We start with  the following:
\begin{lemma} \label{lemma:proposition3.3}
Let $\Omega\subset\R^2$ be a
smooth bounded domain, $O\in\Omega$,  invariant under the action of a cyclic group $G$ of
rotations about the origin which  satisfies \eqref{assumptionGruppoSimmetria} for a certain $m>0$. Let $(\upp)$ be a family of sign changing $G$-symmetric  stationary solutions of \eqref{problem} which satisfies
\eqref{energiaLimitata}.

Let  $k, x_{i,p}$ and $\mu_{i,p}$ for $i=1,\ldots, k$ be as in  Proposition \ref{prop2.2demarchisiannipacella2}.
Then 
\[
\frac{|x_{i,p}|}{\mu_{i,p}} \ \mbox{ is bounded.}
\]
\end{lemma}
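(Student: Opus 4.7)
The plan is to argue by contradiction: assume that, along a subsequence, $|x_{i,p}|/\mu_{i,p}\to+\infty$ for some $i\in\{1,\dots,k\}$. The strategy is to use the $G$-symmetry of $u_p$ to build $|G|$ disjoint concentration bubbles around the $G$-orbit of $x_{i,p}$, thereby contradicting the energy bound \eqref{energiaLimitata}. First, since $\mu_{i,p}\to 0$ by \eqref{muVaAZero} and $|x_{i,p}|/\mu_{i,p}\to+\infty$, the point $x_{i,p}$ is nonzero for $p$ large, so the orbit $G\cdot x_{i,p}$ has exactly $|G|$ distinct elements. By $G$-invariance of $u_p$, $u_p(g\cdot x_{i,p})=u_p(x_{i,p})$ for every $g\in G$, hence each orbit point is a concentration point sharing the common scale $\mu_{i,p}$; by the radiality of $U$, the rescaled function centered at $g\cdot x_{i,p}$ also converges to $U$ in $C^1_{\mathrm{loc}}$. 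Non-identity elements yield $|g\cdot x_{i,p}-g'\cdot x_{i,p}|\geq 2|x_{i,p}|\sin(\pi/|G|)$, which, divided by $\mu_{i,p}$, diverges under the contradiction hypothesis; in particular, for any fixed $R>0$ the balls $B_{R\mu_{i,p}}(g\cdot x_{i,p})$, $g\in G$, are pairwise disjoint when $p$ is large.

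Next I would compute the energy contribution of each orbit bubble. Changing variable $y=x_{i,p}+\mu_{i,p}z$ and using the $C^1_{\mathrm{loc}}$ convergence $v_{i,p}\to U$ together with $\int_{\R^2}e^U=8\pi$ yields
\[
\lim_{R\to+\infty}\lim_{p\to+\infty} p\int_{B_{R\mu_{i,p}}(x_{i,p})}|u_p|^{p+1}\,dy \,=\, c_i^{2}\cdot 8\pi, \qquad c_i:=\lim_{p\to+\infty}u_p(x_{i,p}).
\]
Summing over $g\in G$ (all $|G|$ contributions are equal by symmetry) and using $\int_\Omega|\nabla u_p|^2=\int_\Omega|u_p|^{p+1}$ together with \eqref{energiaLimitata} gives
\[
|G|\cdot c_i^{2}\cdot 8\pi \,\leq\, \alpha\cdot 8\pi e.
\]
Invoking the sharp asymptotic $c_i^{2}\to e$ at each concentration point (classical for positive least-energy solutions after Adimurthi--Grossi and adapted to the sign-changing, $G$-symmetric/bubble-tower setting in \cite{DeMarchisIanniPacella1,DeMarchisIanniPacella2}), this reduces to $|G|\leq\alpha$. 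Combining with $|G|\geq me$ and $\alpha<m+1$ (so $m\leq|G|/e$) produces $|G|\leq\alpha<m+1\leq|G|/e+1$, i.e.\ $|G|(1-1/e)<1$, hence $|G|<e/(e-1)<2$, forcing $|G|\leq 1$ since $|G|$ is an integer. This contradicts the nontriviality of the $G$-action (the existence regime $m\geq 4$ of the Remark, for instance, guarantees $|G|\geq 11$).

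The main obstacle is the sharp quantization $c_i^{2}\to e$ at every concentration point: the weaker bound $c_i\geq 1$ from \eqref{maxBoundedAwayFromZero} would only yield $|G|\leq\alpha e$, which is compatible with $|G|\geq me$ and $\alpha<m+1$ and does not close the argument. Establishing $c_i^{2}\to e$ requires the fine local asymptotic analysis of $u_p$ around each $x_{i,p}$ (equivalently, the identification of $\sqrt{e}$ as the limit value of $u_p(x_{i,p})$), which is the technical heart of the $G$-symmetric concentration analysis in \cite{DeMarchisIanniPacella2}; once this input is at hand the remaining inequalities combine cleanly.
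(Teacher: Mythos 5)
Your geometric setup (orbit of $x_{i,p}$ under $G$, disjoint balls, energy counting against \eqref{energiaLimitata}) is exactly the paper's strategy, but the quantitative input you rely on creates a genuine gap. You need the sharp quantization $u_p(x_{i,p})\to\sqrt e$ at \emph{every} concentration point, and you yourself flag that the weaker bound $\liminf u_p(x_{i,p})\geq 1$ from \eqref{maxBoundedAwayFromZero}/\eqref{seqmaggioridiuno} only gives $|G|\leq \alpha e$, which is compatible with $|G|\geq me$, $\alpha<m+1$ and does not close the argument. That sharp limit is not available in the generality needed here: it is not proved in \cite{DeMarchisIanniPacella1,DeMarchisIanniPacella2} for all the families $x_{i,p}$ of Proposition \ref{prop2.2demarchisiannipacella2}, and in the bubble--tower regime which is precisely the expected behavior of these $G$-symmetric solutions it should not be expected to hold at inner concentration points (compare \cite[Proposition 2.4-iv)]{DicksteinPacellaSciunzi}, quoted in Section \ref{sectionCasoRadiale}: the ratio of the two extremal values of the radial $2$-nodal solution tends to a limit $<1/2$, so the concentration values cannot all converge to the same constant $\sqrt e$). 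So the "technical heart" you defer to the literature is in fact missing, and the whole point of the calibration $|G|\geq me$, $\alpha<m+1$ in Theorem \ref{teoremaExistenceAB} is to avoid ever needing it.

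The paper closes the estimate differently, using a structural fact you do not exploit: the orbit points $g^j x_{i,p}$ all lie in the \emph{same} nodal domain (say $\mathcal N_p^+$), each carrying only the crude mass $8\pi$ (via Fatou and $\liminf u_p(x_{i,p})\geq 1$), so that $|G|\cdot 8\pi\leq \lim_n p_n\int_{\mathcal N_{p_n}^+}|u_{p_n}|^{p_n+1}$; then the \emph{opposite} nodal domain is discarded not trivially but through the lower bound $p\int_{\mathcal N_p^-}|u_p|^{p+1}\geq 8\pi e$ asymptotically (\cite[Lemma 3.1]{DeMarchisIanniPacella2}), which is where the sign-changing hypothesis and the "$+1$" in $\alpha<m+1$ enter: one gets $|G|\cdot 8\pi\leq(\alpha-1)8\pi e<m\cdot 8\pi e\leq|G|\cdot 8\pi$, a contradiction. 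Your endgame also has a smaller gap: even granting $c_i^2\to e$, you only reach $|G|\leq 1$ and then invoke "nontriviality of the $G$-action", which is not among the hypotheses of the Lemma (only $|G|\geq me$ for some $m>0$); ruling out the trivial group again requires the per-nodal-domain energy lower bound. To repair your proof you would either have to prove the sharp concentration values (a substantial result not at your disposal) or switch to the paper's bookkeeping: weak mass $8\pi$ per orbit bubble inside one nodal domain plus the $8\pi e$ contribution of the other nodal domain.
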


\begin{proof}
The proof is similar to the one of \cite[Proposition 3.3]{DeMarchisIanniPacella2}.\\
Let us fix $i\in\{1,\ldots, k\}$. In order to simplify the notation we drop the dependence on $i$ namely we set
$x_{p}:=x_{i,p}$ and $\mu_{p}:=\mu_{i,p}$.\\

Without loss of generality we can assume that either $(x_{p})_p\subset \mathcal{N}_p^{+}$ or $(x_{p})_p\subset \mathcal{N}_p^{-}$. We prove the result in the case $(x_{p})_p\subset \mathcal{N}_p^{+}$, the other case being similar.\\ 

Let $h:=|G|$, ($\mathbb N\setminus\{0\}\ni h\geq me$) and let us denote by $g^j$, $j=0,\dots, h-1$, the elements of $G$. We consider the rescaled nodal domains
\[
\widetilde{\mathcal N_{p}^+}^{j} :=\{x\in\mathbb R^2\ : \ \mu_p x +g^jx_p\in \mathcal N_p^+\}
,\ \ j=0,\dots, h-1,\]
 and the rescaled functions $z_{p}^{j,+}(x): \widetilde{\mathcal N_{p}^+}^{j} \rightarrow\R$ defined by
\begin{equation}\label{z_j} z_{p}^{j,+}(x):=\frac{p}{u_{p}^+(x_{p})}\left( u_{p}^+(\mu_{p} x+g^jx_{p})-u_{p}^+(x_{p}) \right), \ \ j=0,\dots, h-1.\end{equation}

Observe that, since $\Omega$ is $G$-invariant, $g^j x_p\in\Omega$ for any $j=0,\dots, h-1$. Moreover  $u_{p}$ is $G$-symmetric and $x_p$ satisfies \eqref{distDaBordo}, hence it's not difficult to see from $(\mathcal P_2^{k})$ that each function $z_{p}^{j,+}$ converges to $U$ in $C^1_{loc}(\mathbb R^2)$, as $p\rightarrow \infty$ and $8\pi =\int_{\R^2}e^{U}dx$
(see also \cite[Corollary 2.4]{DeMarchisIanniPacella2}).\\

Assume by contradiction that there exists a sequence $p_n\rightarrow +\infty$ such that $\frac{|x_{p_n}|}{\mu_{p_n}}\rightarrow + \infty$. Let  
\[d_n:=|g^j x_{p_n}-g^{j+1}x_{p_n}|,\quad j=0,..,h-1.\]
Then, since the $h$ distinct points $g^j x_{p_n}$, $j=0,\ldots, h-1$, are the vertices of a regular polygon centered in $O$,   $d_n=2\widetilde d_n \sin{\frac{\pi}{h}}$, where $\widetilde d_n:=|g^jx_{p_n}|\equiv |x_{p_n}|$, $j=0,..,h-1$. Hence 
\begin{equation}\label{denneinfinito}
\frac{d_n}{\mu_{p_n}}\rightarrow +\infty.
\end{equation}
Let \begin{equation}\label{R_n}R_{n}:=\min\left\{\frac{d_n}{3},\frac{d(x_{p_n},\partial\Omega)}{2}, \frac{d(x_{p_n}, NL_{p_n})}{2}\right\},
\end{equation}
then  by \eqref{denneinfinito} and \eqref{distDaBordo} 
\begin{equation}\label{invadeR2}
\frac{R_n}{\mu_{p_n}}\rightarrow  +\infty,
\end{equation}
moreover, by construction,  
\begin{eqnarray}
& B_{R_n}(g^j x_{p_n})\subseteq \mathcal{N}_{p_n}^+, \ \ \mbox{ for }\ j=0,\dots,h-1 \label{contenutoInOmega} \\
&B_{R_n}(g^j x_{p_n})\cap B_{R_n}(g^l x_{p_n}) =\emptyset,\ \ \mbox{ for }j\neq l.  \label{palleDisgiunte} 
\end{eqnarray}
 Using \eqref{invadeR2}, the convergence of $z_{p_n}^{j,+}$ to $U$,  \eqref{seqmaggioridiuno} and Fatou's lemma, we have
\begin{eqnarray}\label{betterEstimate}
8\pi &=&\int_{\R^2}e^{U}dx\nonumber
\\
&\stackrel{\textrm{Fatou + conv. of $v_{p_n}^j$} + \eqref{invadeR2}}{\leq}& \lim_n \int_{ B_{\frac{R_n}{\mu_{p_n}}}(0)  } e^{z_{p_n}^{j,+} + (p_n+1)\left(\log{\left|1+\frac{z_{p_n}^{j,+}}{p_n}\right|}-\frac{z_{p_n}^{j,+}}{(p_n+1)}\right)}dx\nonumber
\\
&= &\lim_n \int_{B_{\frac{R_n}{\mu_{p_n}}}(0)}\left|1+\frac{z_{p_n}^{j,+}(x)}{p_n} \right|^{(p_n+1)}dx\nonumber
\\
&=& \lim_n \int_{B_{\frac{R_n}{\mu_{p_n}}}(0)}\left|\frac{ u^+_{p_n}(\mu_{p_n} x+g^jx_{p_n})}{ u^+_{p_n}(x_{p_n})}dx     \right|^{(p_n+1)}dx\nonumber
\\
&=& \lim_n \int_{B_{R_n}(g^jx_{p_n})}\frac{\left| u^+_{p_n}(x)\right|^{(p_n+1)}}{(\mu_{p_n})^2 \left|u^+_{p_n}(x_{p_n})\right|^{(p_n+1)}}dx\nonumber
\\
&=&\lim_n \frac{p_n}{\left|u^+_{p_n}(x_{p_n})\right|^2}  \int_{B_{R_n}(g^jx_{p_n})} \left| u^+_{p_n}(x)\right|^{(p_n+1)}dx\nonumber
\\
&\stackrel{\eqref{seqmaggioridiuno}}{\leq}&
 \lim_n p_n\int_{B_{R_n}(g^jx_{p_n})} \left| u^+_{p_n}(x)\right|^{(p_n+1)}dx.
\end{eqnarray}
Summing on $j=0,\dots, h-1$, using \eqref{palleDisgiunte}, \eqref{contenutoInOmega} and assumption \eqref{energiaLimitata} we get:
\begin{eqnarray*}
h\cdot 8\pi
&\leq &
\lim_n\ p_n \sum_{j=0}^{h-1} \int_{B_{R_n}(g^jx_{p_n})} \left| u^+_{p_n}(x)\right|^{(p_n+1)}dx
\\
&\stackrel{\eqref{palleDisgiunte} + \eqref{contenutoInOmega} }{\leq}&
\lim_n\   p_n\int_{\mathcal{N}_{p_n}^+} \left| u_{p_n}(x)\right|^{(p_n+1)}dx
\\
&=&
\lim_n\ \left(  p_n\int_{\Omega} \left| u_{p_n}(x)\right|^{(p_n+1)}dx  -  p_n\int_{\mathcal{N}_{p_n}^-} \left| u_{p_n}(x)\right|^{(p_n+1)}dx \right)
\\
&\stackrel{{\footnotesize\mbox{\cite[Lemma 3.1]{DeMarchisIanniPacella2}}}}{\leq} &
\lim_n\   p_n\int_{\Omega} \left| u_{p_n}(x)\right|^{(p_n+1)}dx  -\   8\pi e 
\\
&\stackrel{\eqref{energiaLimitata}}\leq & (\alpha -1)\ 8\pi e
\\
&< & m\ 8\pi e
\end{eqnarray*}
  which gives a  contradiction with \eqref{assumptionGruppoSimmetria}.
  \end{proof}
  
  \

Last using Lemma \ref{lemma:proposition3.3} we can  prove that the number $k$  in  Proposition \ref{prop2.2demarchisiannipacella2}
 is equal to one. 
\begin{lemma}\label{lemma:k=1Radiale}
Let $\Omega\subset\R^2$ be a
smooth bounded domain, $O\in\Omega$,  invariant under the action of a cyclic group $G$ of
rotations about the origin which  satisfies \eqref{assumptionGruppoSimmetria} for a certain $m>0$. Let $(\upp)$ be a family of sign changing $G$-symmetric  stationary solutions of \eqref{problem} which satisfies
\eqref{energiaLimitata}.
Let  $k$  be, as in  Proposition \ref{prop2.2demarchisiannipacella2}, the maximal number  of families of points $(x_{i,p})$, $i=1,\ldots, k$, for which, after passing to a subsequence, $(P^k_1)$, $(P^k_2)$ and  $(P^k_3)$ hold. Then
\[k=1.\]
\end{lemma}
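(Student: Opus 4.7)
The plan is to argue by contradiction. Suppose, towards a contradiction, that $k\geq 2$, so in particular there are two distinct families $(x_{1,p})$ and $(x_{2,p})$ of concentration points satisfying $(\mathcal{P}_1^k)$, $(\mathcal{P}_2^k)$, and $(\mathcal{P}_3^k)$ of Proposition \ref{prop2.2demarchisiannipacella2}. The strategy is to combine the separation condition $(\mathcal{P}_1^k)$ with the uniform bound $|x_{i,p}|\leq C\mu_{i,p}$ supplied by Lemma \ref{lemma:proposition3.3} to force an impossible relation between the concentration scales $\mu_{1,p}$ and $\mu_{2,p}$.

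First, by Lemma \ref{lemma:proposition3.3} applied to $i=1$ and $i=2$, there exists a constant $C>0$, independent of $p$, such that $|x_{1,p}|\leq C\mu_{1,p}$ and $|x_{2,p}|\leq C\mu_{2,p}$ for all $p$ sufficiently large. The triangle inequality then yields
\[
|x_{1,p}-x_{2,p}|\ \leq\ |x_{1,p}|+|x_{2,p}|\ \leq\ C\bigl(\mu_{1,p}+\mu_{2,p}\bigr).
\]
On the other hand, property $(\mathcal{P}_1^k)$ applied to the pair $i=1$, $j=2$ (and to the pair $i=2$, $j=1$) asserts that
\[
\frac{|x_{1,p}-x_{2,p}|}{\mu_{1,p}}\ \To\ +\infty\qquad\text{and}\qquad\frac{|x_{1,p}-x_{2,p}|}{\mu_{2,p}}\ \To\ +\infty
\]
as $p\to+\infty$. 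Dividing the displayed upper bound for $|x_{1,p}-x_{2,p}|$ first by $\mu_{1,p}$ and then by $\mu_{2,p}$ we obtain
\[
C\left(1+\frac{\mu_{2,p}}{\mu_{1,p}}\right)\ \To\ +\infty\qquad\text{and}\qquad C\left(1+\frac{\mu_{1,p}}{\mu_{2,p}}\right)\ \To\ +\infty,
\]
which forces simultaneously $\mu_{2,p}/\mu_{1,p}\to+\infty$ and $\mu_{1,p}/\mu_{2,p}\to+\infty$, an evident contradiction. Hence $k=1$.

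The substantive step has already been carried out in Lemma \ref{lemma:proposition3.3}: that is the place where the $G$-symmetry (rotating $x_{i,p}$ into $h=|G|$ distinct bubble centers $g^j x_{i,p}$) and the sharp energy bound $\alpha<m+1$ are both genuinely used, preventing a single concentration point from drifting away from the origin faster than its own scale $\mu_{i,p}$. Once that estimate is available for each $i=1,\dots,k$, the extraction of $k=1$ is only a triangle-inequality argument, as above; no further asymptotic analysis is required. In particular, this gives the condition $k=1$ needed to invoke Theorem \ref{corollaryCheUso}, from which Proposition \ref{propValeBcasoG}, and hence Theorem \ref{teoremaExistenceAB}, will follow.
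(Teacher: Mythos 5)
Your proof is correct and follows essentially the same route as the paper: assume $k\geq 2$, invoke Lemma \ref{lemma:proposition3.3} to bound $|x_{i,p}|\leq C\mu_{i,p}$, and derive a contradiction with $(\mathcal{P}_1^k)$ via the triangle inequality. The only cosmetic difference is that the paper uses the ordering $\mu_{1,p}=\mu_p^+\leq\mu_{j,p}$ to contradict $(\mathcal{P}_1^k)$ directly in one step, whereas you apply $(\mathcal{P}_1^k)$ to both ordered pairs and obtain the incompatible limits $\mu_{2,p}/\mu_{1,p}\to+\infty$ and $\mu_{1,p}/\mu_{2,p}\to+\infty$; both versions are valid.
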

\begin{proof} The proof is the same as in  \cite[Proposition 3.6]{DeMarchisIanniPacella2}, we repeat it for completeness.
Let us assume by contradiction that $k > 1$ and set $x^+_p=x_{1,p}$. For a family $(x_{j,p})$, $j\in\{2,\ldots, k\}$ by  Lemma \ref{lemma:proposition3.3}, there exists $C>0$ such that
\[
\frac{|x_{1,p}|}{\mu_{1,p}}\leq C\quad\textrm{and}\quad\frac{|x_{j,p}|}{\mu_{j,p}}\leq C.
\]
Thus, since by definition $\mu^+_p=\mu_{1,p}\leq \mu_{j,p}$, also
\[
\frac{|x_{1,p}|}{\mu_{j,p}}\leq C.
\]
Hence
\[
\frac{|x_{1,p}-x_{j,p}|}{\mu_{j,p}}\leq\frac{|x_{1,p}|+|x_{j,p}|}{\mu_{j,p}}\leq C\quad \textrm{as $p\to+\infty$},
\]
which  contradicts $(\mathcal{P}_1^k)$.
\end{proof}
\

\

\section{A special case in Theorem \ref{teoremaExistenceAB}: the radial solutions} \label{sectionCasoRadiale}

In this section we show that, when the domain $\Omega$ is the unit ball in $\R^2$, the unique (up to a sign) radial solution 
$u_{p,\mathcal K}$ of \eqref{stationaryProblem} with  $\mathcal K\geq 2$ nodal  regions satisfies conditions $(A)$ and $(B)$.\\

Thus Theorem \ref{teorema:principale} applies to $u_{p,\mathcal{K}}$, namely we re-obtain the result already known from \cite{DicksteinPacellaSciunzi} through a different proof which does not rely on radial arguments.\\

Let us fix the number of nodal regions $\mathcal K\geq 2$.  As before we  assume w.l.o.g. that $\|u_{p,\mathcal K}\|_{\infty}=\|u_{p,\mathcal K}^+\|_{\infty}$. \\

The main result is the following:
\begin{proposition}
Let $\Omega$ be the unit ball in $\R^2$ and for $\mathcal K\geq 2$ let $u_{p,\mathcal K}$ be the unique radial solution of \eqref{stationaryProblem} with $\mathcal K$ nodal domains. 
Then there exists $m (=m(\mathcal K))$ $>0$ for which the assumptions
of Theorem \ref{teoremaExistenceAB} are satisfied.
\end{proposition}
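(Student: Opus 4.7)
The plan is to verify the three hypotheses of Theorem \ref{teoremaExistenceAB}, namely (i) the existence of a cyclic rotation group $G$ with $|G|\geq me$ under which $\Omega$ is invariant, (ii) the $G$-symmetry of $u_{p,\mathcal K}$, and (iii) the energy bound \eqref{energiaLimitata} with some $\alpha<m+1$. Since $\Omega$ is the unit ball centered at the origin, it is invariant under \emph{every} cyclic group of rotations about $O$; in particular, once we fix $m>0$, we may choose $G$ to be the cyclic group of rotations of order $|G|=\lceil m e\rceil$, so that \eqref{assumptionGruppoSimmetria} holds automatically. Moreover, $u_{p,\mathcal K}$ is radial by definition, hence trivially $G$-symmetric for any such $G$. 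The whole proof therefore reduces to finding some $m>0$ for which \eqref{energiaLimitata} is satisfied with a suitable $\alpha<m+1$.

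The key analytic input is the precise energy asymptotics of the radial nodal solution, namely
\[
\lim_{p\to+\infty}\,p\int_\Omega |\nabla u_{p,\mathcal K}|^2\,dx \;=\; \mathcal K\cdot 8\pi e.
\]
This formula is established in \cite{GrossiGrumiauPacella2,DicksteinPacellaSciunzi} via the bubble-tower analysis for radial sign-changing solutions, in which $u_{p,\mathcal K}$ is shown, after rescaling near the center, to develop $\mathcal K$ concentrating bubbles (of alternating sign), each of which contributes exactly $8\pi e$ to the weighted Dirichlet energy, in view of the classification of the Liouville limit \eqref{LiouvilleProblem}. I shall simply quote this result rather than redo the (long) radial ODE argument.

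Granted this limit, the choice of parameters is immediate. I set $m:=\mathcal K$ and $\alpha:=\mathcal K+\tfrac12$, so that $\alpha<m+1=\mathcal K+1$. By the limit above, for all $p$ sufficiently large
\[
p\int_\Omega |\nabla u_{p,\mathcal K}|^2\,dx \;\leq\; \alpha\cdot 8\pi e,
\]
which is \eqref{energiaLimitata}. Together with the choice $|G|=\lceil\mathcal K e\rceil\geq me$ and the obvious $G$-symmetry of $u_{p,\mathcal K}$, this verifies all the hypotheses of Theorem \ref{teoremaExistenceAB}.

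The only nontrivial ingredient, and the potential obstacle, is the sharp energy asymptotic $p\int|\nabla u_{p,\mathcal K}|^2\to\mathcal K\cdot 8\pi e$. One could also argue with just the upper bound $p\int|\nabla u_{p,\mathcal K}|^2\leq(\mathcal K+o(1))\,8\pi e$, which is slightly softer: it follows from a Pohozaev-type identity combined with the $\mathcal K$ bubble-tower profile, noting that each rescaled bubble converges to the Liouville profile $U$ whose total $L^1$-mass of $e^U$ equals $8\pi$. In either formulation the statement is in the cited references, so no new computation is required here; the proof is complete once the reference is invoked and the parameters $m,|G|,\alpha$ are chosen as above.
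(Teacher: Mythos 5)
Your overall strategy is the right one, and it is the paper's: the ball is invariant under the cyclic rotation group of \emph{every} order and the radial solution is automatically $G$-symmetric, so the whole statement reduces to producing some pair $(m,\alpha)$ with $\alpha<m+1$ for which \eqref{energiaLimitata} holds, with complete freedom in how large $m$ is taken. The genuine gap is in your ``key analytic input'': the sharp asymptotic $p\int_\Omega|\nabla u_{p,\mathcal K}|^2\,dx\to \mathcal K\cdot 8\pi e$ is not proved in the references you cite, and the heuristic behind it (``$\mathcal K$ bubbles, each contributing exactly $8\pi e$'') is false for the radial solution when $\mathcal K\geq 2$. The value $2\cdot 8\pi e$ is the limit energy of \emph{low-energy} nodal solutions whose positive and negative parts concentrate at two \emph{distinct} points (\cite{GrossiGrumiauPacella1}); the radial solution instead concentrates as a bubble \emph{tower} at the origin, where the outer bubbles solve \emph{singular} Liouville equations with different masses and amplitudes -- this is precisely the subject of \cite{GrossiGrumiauPacella2}. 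Concretely, your picture is incompatible with facts quoted in this very paper: by \cite[Proposition 2.4-iv)]{DicksteinPacellaSciunzi} one has $\|u_{p,\mathcal K}^-\|_\infty/u_{p,\mathcal K}(0)\to\alpha<\tfrac12$, while $\liminf_p\|u_{p,\mathcal K}^-\|_\infty\geq 1$ by \eqref{maxBoundedAwayFromZero}, so $u_{p,\mathcal K}(0)\to c$ with $c>2$ (not $\sqrt e$), and the innermost bubble alone already contributes $c^2\cdot 8\pi>32\pi>8\pi e$ to $\lim_p p\int_\Omega|u_{p,\mathcal K}|^{p+1}$. Hence with your parameters $m=\mathcal K$, $\alpha=\mathcal K+\tfrac12$, the bound \eqref{energiaLimitata} is unjustified and in fact fails already for $\mathcal K=2$.

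The repair costs nothing and is exactly the paper's proof: \cite[Proposition 2.1]{DicksteinPacellaSciunzi} gives condition $(A)$ for $u_{p,\mathcal K}$, i.e.\ a finite constant $C=C(\mathcal K)$ with $p\int_\Omega|\nabla u_{p,\mathcal K}|^2\,dx\leq C$ for $p$ large. Set $\alpha:=C/(8\pi e)$, pick any $m>\alpha-1$, and let $G$ be a cyclic rotation group of order at least $me$; then \eqref{assumptionGruppoSimmetria} and \eqref{energiaLimitata} hold and Theorem \ref{teoremaExistenceAB} applies. No sharp energy asymptotics are needed, precisely because in the ball $m$ is unconstrained; pinning down an optimal $m(\mathcal K)$ is where your argument overreaches.
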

\begin{proof}
In \cite[Proposition 2.1]{DicksteinPacellaSciunzi} it has been proved that $u_{p,\mathcal{K}}$ satisfies assumption $(A)$ (by extending the arguments employed in \cite{DeMarchisIanniPacella1} for the case with two nodal regions). Hence there exists $C(=C(\mathcal{K}))$ such that
\[p\int_{\Omega}|u_{p,\mathcal{K}}|^{p+1}dx\leq C.\]
Let us define 
\[\alpha:=\frac{C}{8\pi e}\]
and let  $\bar m>0$ be such that \[\bar m>\frac{C}{8\pi e} -1.\]
Let $G$ be a cyclic group of rotations about the origin such that  $|G|\geq \bar me$.
Of course the unit ball is $G$ invariant, moreover, since $u_{p,\mathcal K}$ is radial, it is in particular $G$-symmetric and so we have proved that  \eqref{assumptionGruppoSimmetria} and \eqref{energiaLimitata} hold true with $m=\bar m$. 
\end{proof}

\

We conclude the section with some more consideration on condition $(B)$ in the radial case.

\

It is easy to show that  (see \cite[Proposition 2.4 - i)]{DicksteinPacellaSciunzi} for the proof) $\|u_{p,\mathcal K}\|_{\infty}=u_{p,\mathcal K}(0)\ (>0)$, namely $x_p^+\equiv 0$. Hence condition $(B)$ in this radial case reads as follows:
\begin{equation}  \tag{B}
\lim_{R\rightarrow +\infty}\lim_{p\rightarrow +\infty} S_{p,R}=0,
\end{equation}
where, for $R>0$, 
\[
S_{p,R}:=\sup \left\{ \left|\frac{u_{p,\mathcal K}(r)}{u_{p,\mathcal K}(0)}\right|^{p-1}\ : \  \ R\mu_p^+<r<1 \right\}
\]
and $u_{p,\mathcal K}(r)=u_{p,\mathcal K}(|x|)$, $r=|x|$.
\\
\\
\\
In addition to the general characterization in Proposition \ref{proposition:caratterizzazioneB}, it is easy to prove in the radial case, also the following characterization of condition $(B)$:
 \begin{proposition}
Let $\Omega$ be the unit ball in $\R^2$ and for $\mathcal K\geq 2$ let $u_{p,\mathcal K}$ be the unique radial solution of \eqref{stationaryProblem} with $\mathcal K$ nodal domains.
Set $0< r_{p,1} <r_{p,2}<\ldots <r_{p,\mathcal K-1} <1 $ the nodal radii of $u_{p,\mathcal K}(r)$. \\
Then for any $R>0$ there exists $p_R>1$ such that the set $\{  R\mu_p^+<r<r_{p,1} \}\neq \emptyset$ for $p\geq p_R$ and so
\begin{equation} \label{definitionMpRprimo}
\mathcal M'_{p,R}:=\sup \left\{ \left|\frac{u_{p,\mathcal K}(r)}{u_{p,\mathcal K}(0)}\right|^{p-1}\ : \ R\mu_p^+<r< r_{p,1} \right\}
\end{equation} 
is well defined.\\
Moreover condition $(B)$ is equivalent to 
\[
 \left\{
 \begin{array}{lr}
 \displaystyle{ \lim_{p\rightarrow +\infty}}\sup_{\{r_{p,1}<r<1\} }\frac{ |u_{p,\mathcal K}(r)|^{p-1}}{u_p(0)^{p-1}}= 0  & \qquad (B1')\\
 \\
 \displaystyle{\lim_{R\rightarrow +\infty}}\lim_{p\rightarrow +\infty}  \mathcal M'_{p,R}=0 & \qquad (B2')
 \end{array}
 \right.
 \]
 \end{proposition}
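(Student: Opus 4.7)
The plan is to exploit the radial geometry, in which $x_p^+=0$ and the innermost positive nodal region is exactly the disk $\{r<r_{p,1}\}$; this lets the supremum defining $\mathcal S_{p,R}\equiv S_{p,R}$ split cleanly at $r_{p,1}$ into the ``first annulus'' part $\mathcal M'_{p,R}$ and a remainder whose vanishing is precisely $(B1')$.

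\emph{Step 1: nonemptiness of the set.} The previous proposition shows that $u_{p,\mathcal K}$ satisfies condition $(A)$, so Lemma \ref{convergenzaVp} applies and gives $d(x_p^+,NL_p)/\mu_p^+\to+\infty$ (cf.\ \eqref{xppiuNonVedeLineaNodale}). Since $x_p^+=0$ and, by radiality, $d(0,NL_p)=r_{p,1}$, this reads $r_{p,1}/\mu_p^+\to+\infty$, so for every $R>0$ there exists $p_R$ with $R\mu_p^+<r_{p,1}$ for all $p\geq p_R$. In particular $\{R\mu_p^+<r<r_{p,1}\}\neq\emptyset$ and $\mathcal M'_{p,R}$ is well defined.

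\emph{Step 2: decomposition.} For $p\geq p_R$ I would decompose the radii as the disjoint union
\[
(R\mu_p^+,1)=(R\mu_p^+,r_{p,1})\,\cup\,\{r_{p,1}\}\,\cup\,(r_{p,1},1).
\]
Since $u_{p,\mathcal K}(r_{p,1})=0$, the single point contributes nothing to the supremum, and hence
\[
S_{p,R}=\max\bigl\{\mathcal M'_{p,R},\ T_p\bigr\},\qquad T_p:=\sup_{\{r_{p,1}<r<1\}}\left|\frac{u_{p,\mathcal K}(r)}{u_{p,\mathcal K}(0)}\right|^{p-1}.
\]
The crucial feature here is that $T_p$ is \emph{independent} of $R$.

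\emph{Step 3: the equivalence.} The direction $(B1')\wedge(B2')\Rightarrow(B)$ follows from $\max(a,b)\leq a+b$ for $a,b\geq 0$: the estimate $S_{p,R}\leq\mathcal M'_{p,R}+T_p$ yields $\lim_R\lim_p S_{p,R}\leq\lim_R\lim_p\mathcal M'_{p,R}+\lim_p T_p=0$. Conversely, assuming $(B)$, the bound $\mathcal M'_{p,R}\leq S_{p,R}$ produces $(B2')$ at once. For $(B1')$, the bound $T_p\leq S_{p,R}$ (valid for all $p\geq p_R$) gives, at each fixed $R$, $\limsup_p T_p\leq\lim_p S_{p,R}$; letting $R\to+\infty$ then forces $\limsup_p T_p\leq\lim_R\lim_p S_{p,R}=0$, and since $T_p\geq 0$ this is exactly $(B1')$.

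The \emph{main obstacle}, though minor, is the quantifier asymmetry: $(B)$ is an iterated $R$-then-$p$ limit while $(B1')$ is a single $p$-limit. It is resolved precisely by the $R$-independence of $T_p$ produced in Step 2, which lets one commute the order of limits without losing information.
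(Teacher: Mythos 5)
Your proof is correct, and it is essentially the argument the paper has in mind: the paper omits the proof of this radial proposition ("it is easy to prove"), but your splitting of the supremum at $r_{p,1}$, justified by $r_{p,1}/\mu_p^+=d(0,NL_p)/\mu_p^+\to+\infty$ from \eqref{xppiuNonVedeLineaNodale}, is the exact radial analogue of the decomposition used to prove Proposition \ref{proposition:caratterizzazioneB}, with the $R$-independence of the outer piece playing the same role there.
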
 
\begin{remark}
Observe that $(B1')$ was already known, indeed  in \cite[Proposition 2.4 - iv)]{DicksteinPacellaSciunzi} the authors proved that
\[\sup_{\{r_{p,1}<r<1\} }\frac{ |u_{p,\mathcal K}(r)|}{u_{p,\mathcal K}(0)}\longrightarrow \alpha <\frac{1}{2}\ \mbox{ as }\ p\rightarrow +\infty.\] 
\end{remark}

\

\

\end{document}